\theoremstyle{plain}
\newtheorem{theorem}{Theorem}[section]
\newtheorem{lemma}[theorem]{Lemma}
\newtheorem{proposition}[theorem]{Proposition}
\newtheorem{corollary}[theorem]{Corollary}
\theoremstyle{definition}
\numberwithin{equation}{section}
\def\til{~}
\def\Om{\Omega}
\def\R{\mathbb{R}}
\def\Z{\mathbb{Z}}
\def\N{\mathbb{N}}
\def\dist{\textup{dist}}
\def\H{\mathcal{H}}
\def\T{\mathbb{T}}
\renewcommand{\div}{\mathrm{div}}
\newcommand{\e}{\varepsilon}
\newcommand{\okappa}{\overline\kappa}
\newcommand{\pa}{\partial}
\newcommand{\medint}{-\kern -,375cm\int}
\newcommand{\medintinrigo}{-\kern -,315cm\int}
\def\beq{\begin{equation}}
\def\eeq{\end{equation}}
\title[The Asymptotics of the 2D Area-Preserving Mean Curvature Flow]{The Asymptotics of the Area-Preserving Mean Curvature and the Mullins-Sekerka Flow in Two Dimensions}
\author{Vesa Julin}\address{Matematiikan ja Tilastotieteen Laitos, Jyv\"askyl\"an Yliopisto, Finland}\email{vesa.julin@jyu.fi}
\author{Massimiliano Morini}\address{Dipartimento di Scienze Matematiche Fisiche e Informatiche, Universit\`a di Parma, Italy}\email{massimiliano.morini@unipr.it}
\author{Marcello Ponsiglione}\address{Dip. di Matematica, Univ. Roma-I ''La Sapienza'', Roma, Italy} \email{ponsigli@mat.uniroma1.it}
\author{Emanuele Spadaro}\address{ Dip. di Matematica, Univ. Roma-I ''La Sapienza'' Roma, Italy}\email{spadaro@mat.uniroma1.it}
\begin{document}

\begin{abstract} 
We provide the first general result for the asymptotics of the area preserving mean curvature flow in two dimensions showing that   flat flow solutions, starting from any bounded set of finite perimeter, converge with exponential rate to a finite union of  equally sized disjoint disks.  A similar  result is established also for the periodic two-phase Mullins-Sekerka flow. \end{abstract}

\maketitle

\section{Introduction}
In this paper we address the long-time behaviour of two physically relevant area preserving nonlocal geometric flows in the plane: the area-preserving mean curvature and the Mullins-Sekerka flow.  

We start by recalling that a smooth flow of sets $(E_t)_{t\in [0,T)}\subset \R^2$, for some $T>0$, is a solution to the area preserving mean curvature flow if it satisfies

\beq \label{eq:VMCF}
V_t = -\kappa_{E_t} +  \okappa_{E_t} \quad\text{on }\pa E_t\,,
\eeq
where $V_t$ denotes the normal velocity, $\kappa_{E_t}$ the curvature and  $\okappa_{E_t}:=\medintinrigo_{\pa E_t}\kappa_{E_t}\, d\H^1 $ the integral average of the curvature of the evolving boundary $\pa E_t$. Such a  geometric flow has been proposed in the physical literature as a model for coarsening phenomena. For example, one can consider systems that,
after a first relaxation time, can be described by two subdomains of nearly pure phases far from equilibrium,
evolving in a way to decrease the total interfacial length between the phases while keeping their area constant  (for the physical background see \cite{CRCT95, MuSe13,TW72,W61}). An important feature of the flow is that it can be regarded  as a gradient flow of the perimeter with respect to a suitable (formal) $L^2$-type Riemannian structure.

The second geometric evolution we consider,  the  two-phase Mullins-Sekerka flow in the flat torus $\T^2$, is governed by the law 
\beq
\label{eq:Mull-Sek}
\begin{cases}
V_t= [\pa_{\nu_t}u_t] & \text{on $\pa E_t$,}\\
-\Delta u_t=0 & \text{in $\T^2 \setminus \pa E_t$,}\\
u_t=\kappa_{E_t} & \text{on $\pa E_t$},
\end{cases}
\eeq
where $\nu_t$ denotes the external normal to $\partial E_t$, $[\pa_{\nu_t}u_t]$ denotes  the jump of the normal derivative of $u_t$ at $\pa E_t$, i.e.,  $[\pa_{\nu_t}u_t]:=\pa_{\nu_t}u^+_t-\pa_{\nu_t}u^-_t$, with  $u^+_t$ and $u^-_t$ denoting the restrictions of $u_t$ to $\T^2 \setminus E_t$ and $E_t$ respectively, and $\kappa_{E_t}$ is as before the curvature of the evolving boundary. Let us notice that the choice of the flat torus $\T^2$ instead of a bounded domain $\Om$ is made to avoid in the first place boundary effects.
The Mullins-Sekerka flow is a nonlocal generalization of the mean curvature flow arising from physics. It can be seen as a quasistatic variant of the Stefan problem  and it was originally proposed as an isotropic model for solidification and liquefaction phenomena when the specific heat is negligible, see \cite{MS}. Moreover, it arises as a singular limit of the Cahn-Hilliard equation, see \cite{AlBaCh, Pego}. Common features with \eqref{eq:VMCF} are the area preserving character and the gradient flow structure (this time with respect to a suitable $H^{-1}$-Riemannian structure).

It is well-known that, in general, smooth solutions of \eqref{eq:VMCF} may develop singularities  in finite time, such as disappearance  and coalescence of components,  pinch-offs and curvature blow-up, even in two dimensions (see for instance \cite{EI, M, MaSim}). The same can be expected for the flow \eqref{eq:Mull-Sek}. 
  The possible singular behaviour of \eqref{eq:VMCF} and \eqref{eq:Mull-Sek} is even wilder than that of the unconstrained mean curvature flow, due to their nonlocal character  and the subsequent lack of a comparison principle.
 Thus, for a well defined global-in-time evolution one has to introduce  suitable notion of weak solution which is  capable of handling  singularities, changes in topology and, possibly, rough initial data.  This is a well-established feature of curvature flows, and for several geometric motions, different definitions of weak solutions have been introduced in the literature.

Due to the lack of a comparison principle and based on the underlying gradient flow structure, a natural choice for \eqref{eq:VMCF} and \eqref{eq:Mull-Sek} is the minimizing movement approach proposed for the mean curvature flow indepentently by Almgren, Taylor and Wang \cite{ATW} and by Luckhaus and Sturzenhecker \cite{LS}, and adapted to the volume constrained case in \cite{MSS}. Note that Luckhaus and Sturzenhecker \cite{LS} introduce a similar variational scheme for \eqref{eq:Mull-Sek} as well, see also \cite{Rog} where the same scheme is further analyzed. We recall that the minimizing movement method is based on implicit time-discretization  and recursive minimization of suitable incremental problems. 
The limiting time-continuous evolutions constructed in this way are usually referred to as \emph{flat flows}. We refer to Sections~\ref{sec:3}~and~\ref{sec:4} for the precise definition of flat flow solution of \eqref{eq:VMCF} and \eqref{eq:Mull-Sek}, respectively.

Once  a global-in-time weak solution has been constructed, it is a natural problem to investigate its asymptotics. The focus of the paper is the long-time behaviour of flat flows in two dimensions. Previous results on the long-time convergence of volume preserving flows are mostly confined to the case of smooth solutions starting from specific classes of initial regular sets, see for instance \cite{ES, Hui, Joonas} for the volume preserving mean curvature flow and \cite{AFJM, Chen, ES98, GarRau} for the Mullins-Sekerka flow.  For less general initial data, 
the long time behaviour of the volume preserving mean curvature flow starting from convex and star-shaped sets (see \cite{BCCN, KK})  has been characterized only up to (possibly diverging in the case of \cite{BCCN}) translations. 
Finally, concerning flat flow solutions the most general result is due to \cite{JN} where the asymptotic convergence to  finitely many disjoint balls is proven in two and three dimensions for arbitrary bounded initial sets of finite perimeter, but only up to (possibly diverging) translations and without a convergence rate. 
 
 In our main result we are able to rule out translations and we provide in two dimensions the first full convergence result for the asymptotics of the area preserving mean curvature and the Mullins-Sekerka flow.
We show that every flat flow solutions of \eqref{eq:VMCF} starting from \emph{any} set of finite perimeter asymptotically converge, with \emph{exponential rate}, to a finite disjoint union of (possibly tangent) equally sized discs.
Under the additional assumption that the perimeter of the initial set is smaller than $2$, we establish a similar result also for \eqref{eq:Mull-Sek}). Note that such an additional condition is assumed for simplicity to rule out lamellae as possible limiting sets (see Sections~\ref{sec:4} for further details). We refer to the next section for the precise statements.  

Let us finally mention that the analysis  of this paper extends in two dimensions the results proven in \cite{MoPoSpa} (see also \cite{DeGKu} for related results in the flat torus)  for the discrete minimizing movements of the volume preserving mean curvature flow to the time-continuous limiting evolutions. 
 
 \subsection{Statement of the main results}\label{sec:1.1}

In the previous work \cite{MoPoSpa} three of the authors prove that in all dimensions the discrete  approximate volume preserving mean curvature flow converges exponentially fast to a disjoint union of  balls with equal size. This is the optimal convergence result but it leaves open the question of the convergence of the limiting flat flow. On the other hand, in \cite{JN} the first author and Niinikoski prove that the limiting flat flow converges in low dimensions $\R^2$ and $\R^3$ to a disjoint union of balls, up to possible translations of the components. Again this result does not prove the full convergence nor does it provide any rate of convergence. In both papers  it was observed that a key technical issue is to prove a quantitative version of the Alexandrov theorem, which in the classical form states that the only compact smooth hypersurfaces with constant mean curvature are union of spheres. In this paper we develop this idea further and observe that we may prove a geometric inequality, very much related to the  quantitative Alexandrov theorem, which implies the full convergence of the flow and also gives the exponential rate of convergence.    

There has been a lot of recent research on generalizations and quantifications of the Alexandrov theorem. We refer to \cite{Ci} for an overview of this challenging problem, and mention the works \cite{DM, DMMN, RKS} on the  characterization of critical sets of the isoperimetric problem and \cite{CM,CV, KM} on quantification of the Alexandrov theorem.

We state our quantitative version in a form that is suitable for the study of the equation \eqref{eq:VMCF}. 
We denote the length of the boundary or more generally the  perimeter of a set $E$ by $P(E)$ and  by $|E|$ its area. We also denote by $P_d= 2\sqrt{\pi m d}$ the perimeter of the union of $d$ disks with total area $m$.  Our first result reads as follows. 
\begin{theorem}\label{cor:2DAle}
  Let $m, M>0$ and let $E\subset\R^2$ be a bounded open set of class $C^2$, with $|E|=m$ and  $P(E)\leq M$. Then there exists a constant $C(m,M)>0$ such that 
\[
\min_{d \in \N}|P(E) - P_d|  \leq C \|\kappa_E - \okappa_E \|_{L^2(\pa E)}^2.
\]
Moreover, if $\delta_0>0$ and $d \in  \N$, are such that $P_{d} \leq P(E) \leq P_{d+1} - \delta_0$, then it holds 
\begin{equation}\label{e.coroAlex}
P(E) - P_d  \leq C_0 \|\kappa_E -\okappa_E \|_{L^2(\pa E)}^2,
\end{equation}
with $C_0= C_0(m,M,\delta_0)$. 
\end{theorem}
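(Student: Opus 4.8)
The plan is to reduce the estimate to a component-by-component analysis and to exploit an exact orthogonal splitting of the deficit $\D:=\|\kappa_E-\okappa_E\|_{L^2(\pa E)}^2$. Since $P(E)\le M$ and the values $P_d=2\sqrt{\pi md}$ form a discrete increasing sequence with $P_1=2\sqrt{\pi m}\le P(E)$ (isoperimetric inequality), the quantity $\min_d|P(E)-P_d|$ is bounded by a constant depending only on $m,M$; hence it suffices to argue in the regime $\D<\e_0(m,M)$, the complementary case being absorbed into $C$. Writing $\pa E$ as a disjoint union of simple closed $C^2$ curves, $n_+$ positively oriented (outer) and $n_-$ negatively oriented (holes), Gauss--Bonnet gives $\int_\Gamma\kappa\,d\H^1=\pm2\pi$ on each, so $\int_{\pa E}|\kappa|\,d\H^1\ge2\pi(n_++n_-)$ while $\okappa_E\,P(E)=2\pi(n_+-n_-)$. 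Feeding this into the identity $\int_{\pa E}\kappa^2\,d\H^1=\D+\okappa_E^2P(E)$ together with Cauchy--Schwarz yields
\[
\D\ \ge\ \frac{16\pi^2\,n_+n_-}{P(E)}\ \ge\ \frac{16\pi^2\,n_+n_-}{M},
\]
so that choosing $\e_0<16\pi^2/M$ forces $n_-=0$: there are no holes and $E=E_1\cup\dots\cup E_k$ is a disjoint union of topological discs with boundaries $\Gamma_i=\pa E_i$ of length $\ell_i$ and enclosed area $A_i$, where $\sum_i\ell_i=P(E)$ and $\sum_iA_i=m$. A complementary use of the size part of $\D$ introduced below, together with $A_i\le\ell_i^2/4\pi$ and $\sum_iA_i=m$, then bounds $k\le M^2/(4\pi m)$ and shows that all $\ell_i$ and $A_i$ are comparable to $P(E)/k$ and $m/k$; securing this non-degeneracy is what makes all constants below uniform in $m,M$.

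The core is the elementary identity, valid on each $\Gamma_i$ because $\int_{\Gamma_i}(\kappa-\okappa_i)\,d\H^1=0$ for the local average $\okappa_i:=2\pi/\ell_i$,
\[
\int_{\Gamma_i}(\kappa-\okappa_E)^2\,d\H^1=\underbrace{\int_{\Gamma_i}(\kappa-\okappa_i)^2\,d\H^1}_{=:S_i}+\underbrace{(\okappa_i-\okappa_E)^2\,\ell_i}_{=:T_i},
\]
whence $\D=\sum_i(S_i+T_i)$. The shape term $S_i$ controls the distance of $\Gamma_i$ from the circle of radius $\ell_i/2\pi$: in the tangent-angle parametrization $\theta(s)=\frac{2\pi}{\ell_i}s+\psi(s)$ one has $\psi'=\kappa-\okappa_i$ and $\int\psi'=0$, so $\|\psi'\|_{L^2}^2=S_i$ and, after a rotation, $\osc\,\psi\le C\sqrt{\ell_i\,S_i}$ is small; thus $\Gamma_i$ is a small normal graph over that circle. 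On such a graph a Wirtinger-type inequality for the enclosed area (the analogue, without convexity, of Gage's inequality $\int_{\Gamma_i}\kappa^2\,d\H^1\ge\pi\ell_i/A_i$) gives the quantitative isoperimetric bound $\ell_i^2-4\pi A_i\le C\,S_i$, and hence $0\le\ell_i-2\sqrt{\pi A_i}\le C\,S_i$.

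Next I convert the size term into control of the areas. From $\okappa_i=2\pi/\ell_i$, $\okappa_E=2\pi k/P(E)$ and the non-degeneracy one gets $(\ell_i-L_*)^2\le C\,T_i$ with $L_*:=P(E)/k=2\pi/\okappa_E$; summing, using $\sum_i(\ell_i-L_*)=0$ and $A_i=\ell_i^2/4\pi+O(S_i)$, yields $L_*^2/4\pi=m/k+O(\D)$ and then $\sum_i(A_i-m/k)^2\le C\,\D$. Since $t\mapsto\sqrt t$ is smooth and concave on the relevant compact range, this gives $0\le\sqrt{km}-\sum_i\sqrt{A_i}\le C\sum_i(A_i-m/k)^2\le C\,\D$. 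I then combine the two controls through the exact identity
\[
P(E)-P_k=\sum_i\bigl(\ell_i-2\sqrt{\pi A_i}\bigr)-2\sqrt{\pi}\Bigl(\sqrt{km}-\sum_i\sqrt{A_i}\Bigr),
\]
whose first bracket lies in $[0,C\D]$ (shape) and whose second lies in $[0,C\D]$ (size), so $|P(E)-P_k|\le C\,\D$. As $P_k$ is admissible, $\min_d|P(E)-P_d|\le|P(E)-P_k|\le C\|\kappa_E-\okappa_E\|_{L^2(\pa E)}^2$, which is the first assertion. For the refined estimate the hypothesis $P_d\le P(E)\le P_{d+1}-\delta_0$ pins down $k=d$ once $\e_0$ is small relative to $\delta_0$ and to the minimal gap among the finitely many relevant $P_j$: indeed $k\ge d+1$ would force $C\D\ge\delta_0$, while $k\le d-1$ would force $C\D\ge P_d-P_{d-1}$. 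With $k=d$ and $P(E)\ge P_d$ one concludes $P(E)-P_d=|P(E)-P_k|\le C_0\,\D$, the dependence $C_0=C_0(m,M,\delta_0)$ arising exactly from this selection of $k$.

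The step I expect to be the main obstacle is the non-degeneracy analysis in the small-deficit regime: ruling out the holes and, above all, excluding vanishingly small components and bounding their number $k$, since this is what guarantees uniform constants in the per-component estimates. By contrast, the per-component quantitative isoperimetric inequality $\ell_i^2-4\pi A_i\le C\,S_i$, though it is the analytic heart, is in the near-circular regime a standard Wirtinger/Gage computation.
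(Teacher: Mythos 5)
Your proposal is correct and proves the same statement with the same overall architecture (reduce to the small--deficit regime, control the topology, analyze each component as a near--circle, recombine), but the key technical devices differ from the paper's at every stage, so it is worth recording the comparison. The paper obtains the theorem as an immediate corollary of Proposition~\ref{prop:2DAle}, whose proof (i) locates one component of definite area via a covering lemma from \cite{MoPoSpa}, uses it to bound $\okappa_E$ and then the length of every boundary curve from below, and excludes holes by a component-wise contradiction; (ii) writes each component as a nearly spherical graph over a disc of equal area via the tangent-angle parametrization; (iii) invokes the quantitative Alexandrov theorem of \cite{MoPoSpa} to get $0\le P(E_i)-2\sqrt{\pi |E_i|}\le C\|\kappa_{E_i}-\okappa_{E_i}\|^2_{L^2}$; and (iv) recombines by comparing $P_d$ with $2\pi\sum_i r_i$ through $|d\,r-\sum_i r_i|\le C\sum_{i<j}(r_i-r_j)^2$. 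You instead kill the holes with a single global Gauss--Bonnet/Cauchy--Schwarz computation giving $\|\kappa_E-\okappa_E\|_{L^2}^2\ge 16\pi^2 n_+n_-/P(E)$, which is cleaner than Steps~1--4 of the paper; you extract the non-degeneracy (lower bound on the $\ell_i$, upper bound on $k$) from the size part $T_i$ of the exact orthogonal splitting $\int_{\Gamma_i}(\kappa-\okappa_E)^2\,d\H^1=S_i+T_i$ rather than from the covering lemma, and this does close: $|2\pi-\ell_i\okappa_E|\le\sqrt{\e_0 M}$ forces all $\ell_i$ to be comparable, whence $m\le\sum_i\ell_i^2/4\pi$ and $\sum_i\ell_i\le M$ give the claimed bounds on $k$ and $\ell_{\min}$; you replace the citation of the quantitative Alexandrov theorem by the near-circular Gage/Wirtinger bound $\ell_i^2-4\pi A_i\le C\,S_i$, which is equivalent in content (it is exactly the two-dimensional nearly-spherical stability estimate, provable by Fourier expansion of the graph function) but is the one step you leave as ``standard'' --- note that Gage's inequality fails for general non-convex curves, so the restriction to small normal graphs is essential and must be kept explicit when you write it out; and you recombine through the exact identity $P(E)-P_k=\sum_i\bigl(\ell_i-2\sqrt{\pi A_i}\bigr)-2\sqrt{\pi}\bigl(\sqrt{km}-\sum_i\sqrt{A_i}\bigr)$ with both brackets nonnegative and of order $\|\kappa_E-\okappa_E\|^2_{L^2}$, a self-contained alternative to the paper's radius comparison. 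The selection of $k=d$ under the gap hypothesis and the absorption of the large-deficit regime into the constant match the paper exactly. What your route buys is independence from \cite{MoPoSpa}; what it costs is that the two steps you declare routine (the near-circular isoperimetric-deficit bound and the non-degeneracy bookkeeping) carry essentially all the analytic weight and would need to be written in full.
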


The novelty of the above result is that on the right-hand-side we have quadratic dependence on the curvature which is clearly  optimal. One may compare this result to the quantification of the Willmore energy \cite{RS} or to the optimal quantitative isoperimetric inequality \cite{FMP}, which both have similar scaling. The inequalities in Theorem \ref{cor:2DAle} are geometric and do not measure how close the set $E$ is to the union of disks. In the planar case the closeness of $E$ to the union of disks is proven in \cite{FJM} (see also Proposition \ref{prop:2DAle} in Section 2). The above result is proven in the planar case but it could be true also in higher dimensions.  

As we already mentioned, the motivation for the geometric inequality in Theorem \ref{cor:2DAle} is the proof of the asymptotic convergence of the area-preserving mean curvature flow equation \eqref{eq:VMCF}. 

\begin{theorem} \label{thm2}
Let $\{E(t)\}_{t\geq 0}$ be an area-preserving flat flow for \eqref{eq:VMCF} starting from a bounded set of finite perimeter $E(0)\subset\R^2$.
Then, there exist $d\in \N$ disjoint open disks in the plane $D_r(x_1), \ldots, D_r(x_d)$, with $\pi r^2 d = |E(0)|$, and there exists a constant  $C>1$ such that, setting $ E_\infty= \bigcup_{i=1}^d D_r(x_i)$, it holds 
\beq\label{expo0}
\sup_{x\in E(t) \Delta E_\infty}\dist(x, \pa E_\infty)+
|P(E(t))-P(E_\infty)|\leq C e^{-\frac{t}{C}}
\eeq
for all $t\geq0$. 
\end{theorem}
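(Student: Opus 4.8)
The plan is to drive a Grönwall-type argument with the quantitative Alexandrov inequality of Theorem~\ref{cor:2DAle}, applied along the (discrete approximations of the) flow, and then to upgrade the resulting exponential decay of the perimeter gap into the full geometric convergence \eqref{expo0}. First I would record the variational structure of the flat flow: being obtained as the limit of the incremental minimizing movements, $t\mapsto P(E(t))$ is non-increasing and bounded below by the isoperimetric inequality at the fixed area $m=|E(0)|$, so $P(E(t))\to P_\infty$. Minimality of each incremental step yields, in the time-continuous limit and for a.e. $t$, the dissipation inequality $\frac{d}{dt}P(E(t))\leq -\|\kappa_{E(t)}-\okappa_{E(t)}\|_{L^2(\pa E(t))}^2$ (the normal velocity being $V_t=\okappa_{E(t)}-\kappa_{E(t)}$, consistent with \eqref{eq:VMCF}), together with the regularity needed to make sense of $\kappa_{E(t)}$. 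Invoking the subconvergence of the flow to unions of equal disks, so that $P_\infty=P_d$ for a well-defined $d\in\N$, I would then fix $T_0$ so large that the gap condition $P_d\leq P(E(t))\leq P_{d+1}-\delta_0$ holds for $t\geq T_0$ with a uniform $\delta_0=\delta_0(d)>0$, and set $D(t):=P(E(t))-P_d\geq 0$.

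The core of the argument is immediate: for $t\geq T_0$ the bound \eqref{e.coroAlex} gives $D(t)\leq C_0\|\kappa_{E(t)}-\okappa_{E(t)}\|_{L^2(\pa E(t))}^2$, while the dissipation inequality gives $\frac{d}{dt}D(t)\leq -\|\kappa_{E(t)}-\okappa_{E(t)}\|_{L^2(\pa E(t))}^2$. Combining the two produces the differential inequality $\frac{d}{dt}D(t)\leq -\frac{1}{C_0}D(t)$, whence $D(t)=P(E(t))-P(E_\infty)\leq Ce^{-t/C_0}$, which is precisely the perimeter term in \eqref{expo0}. In practice I would carry out this estimate at the discrete level, where the incremental minimizers are smooth and Theorem~\ref{cor:2DAle} applies verbatim, obtaining a discrete decay $D_{k+1}\leq(1-\tfrac{ch}{C_0})D_k$ and passing to the limit $h\to0$.

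It remains to convert the exponential smallness of $D(t)$ into the geometric statement and, crucially, to rule out translations. By Proposition~\ref{prop:2DAle} (the closeness estimate of \cite{FJM}), the smallness of $D(t)$ forces $E(t)$ to be $O(\sqrt{D(t)})=O(e^{-t/2C_0})$-close, in the sense of the first term of \eqref{expo0}, to a union of $d$ equal disks $\bigcup_i D_r(x_i(t))$. The exponential decay of $D(t)$ also makes the dissipation exponentially integrable, so that $\int_t^\infty\|V_s\|_{L^2(\pa E(s))}\,ds\lesssim\int_t^\infty\sqrt{D(s)}\,ds\lesssim e^{-t/2C_0}$; since the displacement of the barycenters of the components is controlled by this quantity, the centers $x_i(t)$ form a Cauchy family converging exponentially to limit points $x_i$, which define $E_\infty=\bigcup_{i=1}^d D_r(x_i)$. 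Adding the two exponentially small contributions, the shape error at time $t$ and the residual motion of the centers from $t$ to $\infty$, yields \eqref{expo0} after enlarging $C$.

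I expect the genuine difficulties to lie not in the clean Grönwall step but in the two technical supports it rests on. The first is to establish, for the non-smooth flat flow, the dissipation inequality together with enough regularity of $E(t)$ to legitimately invoke the $C^2$-hypothesis of Theorem~\ref{cor:2DAle}; this is most cleanly handled by remaining at the discrete level, where minimizers are regular, and passing to the limit. The second, and the main improvement over \cite{JN}, is controlling the motion of the individual disk centers so as to exclude a residual drift. The latter is delicate when the limiting disks are tangent, since the connected components of $E(t)$ must then be tracked and their barycenters identified uniformly in $t$; this is exactly where ruling out the translations in the convergence takes place.
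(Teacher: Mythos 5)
Your proposal follows the paper's strategy in outline --- combine the dissipation of the minimizing--movements scheme with the quantitative Alexandrov inequality \eqref{e.coroAlex} to produce exponential decay, work at the discrete level where the incremental minimizers are regular, and then upgrade to geometric convergence --- but three of its supporting steps would fail as written. First, the chain $\int_t^\infty\|V_s\|_{L^2}\,ds\lesssim\int_t^\infty\sqrt{D(s)}\,ds$ implicitly uses $\|V_s\|_{L^2}\lesssim\sqrt{D(s)}$, which is backwards: \eqref{e.coroAlex} gives $D(s)\leq C_0\|V_s\|_{L^2}^2$, i.e.\ a \emph{lower} bound on the velocity. The integrability of the velocity must instead come from Cauchy--Schwarz applied to the dissipation over unit time intervals, $\int_t^{t+1}\|V_s\|_{L^2}\,ds\leq\big(\int_t^{t+1}\|V_s\|_{L^2}^2\,ds\big)^{1/2}\leq\sqrt{D(t)}$, and even this only makes rigorous sense for the discrete scheme, where the velocity is $d_{E_{k-1}^{(h)}}/h$ and one needs the estimate $\int_{\partial E_k^{(h)}}d_{E_{k-1}^{(h)}}^2\,d\H^1\leq C\mathcal{D}(E_k^{(h)},E_{k-1}^{(h)})$ from \cite{MSS}. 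Second, ruling out translations by ``tracking the barycenters of the components'' is exactly what one cannot do for a flat flow: for a.e.\ $t$ the set $E(t)$ has no identified components or normal velocity, and components may merge or split (the limit disks may be tangent). The paper instead proves that $t\mapsto E(t)$ is Cauchy in $L^1$ with exponential rate, by combining the interpolation $|E_k\Delta E_{k-1}|\leq C\ell P+\tfrac{C}{\ell}\int|d_{E_{k-1}}|$ of \cite[Prop.~3.4]{MSS} with exponential decay of the \emph{tail sums} of the dissipation (the algebraic Lemma~\ref{an1}, which also absorbs the finitely many discrete steps where the volume penalization is active and the Euler--Lagrange argument is unavailable); the Hausdorff part of \eqref{expo0} then comes from the $C^{1,\frac12}$ parametrizations of Proposition~\ref{prop:2DAle} at well-chosen times selected by the mean value theorem.

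Third, your Gr\"onwall argument yields only the upper bound $P(E(t))\leq P_d+Ce^{-t/C}$ for the limit flow: perimeter is merely lower semicontinuous under $L^1$ convergence, so after letting $h\to 0$ the quantity $P(E(t))$ could a priori drop strictly below $P_d$, and the discrete inequality $D_k\geq 0$ does not pass to the limit. The matching lower bound is genuinely needed for $|P(E(t))-P(E_\infty)|$ in \eqref{expo0} and is supplied in the paper by the calibration Lemma~\ref{lem:eke}, which gives $P_d\leq P(E(t))+C|E(t)\Delta E_\infty|^{1/3}$; your proposal omits this entirely. A further, more minor, point is that you assume at the outset that the limit perimeter equals $P_d$ by invoking subconvergence from \cite{JN}; the paper does not presuppose this but runs a dichotomy on $F_\infty=\lim_t f_\infty(t)$ (including the degenerate case where the energy reaches $P_d$ in finite time), from which the identification of the limit as $d$ equal disks is a conclusion rather than a hypothesis.
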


The above theorem gives the full characterization and quantitative speed of convergence of the equation \eqref{eq:VMCF}. We expect the result to be sharp, in the sense that the flow may, indeed, converge to a union of tangent disks. In \cite[Theorem 1.4]{FJM} the authors consider the case when the initial set is a union of two ellipses and show that the equation \eqref{eq:VMCF} is well defined and smooth for all times and converges to two tangent disks. In particular, we may not improve the Hausdroff convergence in Theorem \ref{thm2}  to  $C^1$-convergence of the sets. The exponential convergence rate is optimal but we note that the flow may in fact converge to the limiting disks also in finite time. This is the case when we consider as an initial set a union of two disks $D_1, D_2$, which are far apart and $D_2$ is much smaller than $D_1$. Then along the flow the larger disk grows and the smaller one shrinks until it vanishes completely and the flow reaches its equilibrium state in finite time. The same phenomenon occurs when $D_2$ is only slightly smaller than $D_1$ but the time to reach the  equilibrium state tends to infinity when the size of $D_2$ gets closer to the size of $D_1$. This shows that we cannot bound  the constant $C$ by a universal constant, but it may depend on the initial set in a rather complicated way.

We note that our method can be also used to study asymptotic behavior of other geometric flows, and to emphasize this we also address the asymptotics of the two-phase Mullins-Sekerka flow \eqref{eq:Mull-Sek}. To avoid boundary effects  we consider periodic conditions and set the problem in the flat torus $\T^2$ and, as a further simplification, we consider initial configurations with perimeter smaller than that of the single lammella (alternatively, we can think that the size of the torus is big enough compared to the perimeter of the initial set).  

The main result is the following. We denote the perimeter of a set $E$ in the flat torus by $P_{\T^2}(E)$. 
\begin{theorem} \label{thm3}
Let $\{E(t)\}_{t\geq 0}$ be a flat flow solution to the Mullins-Sekerka flow  \eqref{eq:Mull-Sek} in the flat torus $\T^2$ starting from a  set of finite perimeter $E(0)\subset\T^2$, with $P_{\T^2}(E) < 2$.
Then, there exist $d\in \N$ disjoint open disks $D_r(x_1), \ldots, D_r(x_d)$, with $\pi r^2 d = |E(0)|$, and there exists a constant  $C>1$ such that it holds 
\[
|E(t) \Delta E_\infty| + 
|P(E(t))-P(E_\infty)|\leq C e^{-\frac{t}{C}}
\]
for all $t\geq 0$, where   $ E_\infty$ either coincides with $\bigcup_{i=1}^d D_r(x_i)$ or with its complement in $\T^2$.

\end{theorem}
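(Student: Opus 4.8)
The plan is to follow the gradient-flow strategy already used for Theorem~\ref{thm2}, replacing the \emph{local} dissipation of the area-preserving mean curvature flow by the \emph{nonlocal} Dirichlet dissipation of \eqref{eq:Mull-Sek} and measuring convergence in $L^1$ rather than in Hausdorff distance. Since the flat flow is the minimizing-movements limit of the $H^{-1}$-gradient flow of the perimeter at fixed area, the map $t\mapsto P(E(t))$ is non-increasing, and along any smooth portion of the evolution one has the dissipation identity
\[
\frac{d}{dt} P(E(t)) = \int_{\pa E(t)} \kappa_{E(t)} \, V_t \, d\H^1 = -\int_{\T^2} |\nabla u_t|^2 \, dx ,
\]
with $u_t$ the harmonic field of \eqref{eq:Mull-Sek}; at the level of the flat flow this must be read in the integrated/discrete form produced by the incremental minimization, as in Section~\ref{sec:4}. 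The area constraint $|E(t)|=|E(0)|$ fixes the radius of the limit disks via $\pi r^2 d = |E(0)|$.

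The heart of the argument is to close this dissipation into a differential inequality for the perimeter gap. Writing $f=\kappa_{E(t)}-\okappa_{E(t)}$ for the mean-zero trace, the Dirichlet energy coincides, up to the two-sided Dirichlet-to-Neumann (Steklov) energy, with the $\dot H^{1/2}(\pa E(t))$-seminorm of $f$. Once the evolving set is close to a disjoint union of nearly round, well-separated circles, the lowest nonzero Steklov eigenvalue of each component is bounded below, whence
\[
\int_{\T^2} |\nabla u_t|^2 \, dx \geq c \, \|f\|_{L^2(\pa E(t))}^2
\]
(on a circle the mean-zero $\dot H^{1/2}$-seminorm dominates the $L^2$-norm since $|k|\geq1$). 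Inserting this into Theorem~\ref{cor:2DAle}, which gives $P(E(t))-P_d \leq C_0\|f\|_{L^2(\pa E(t))}^2$ in the regime $P_d \leq P(E(t)) \leq P_{d+1}-\delta_0$, yields
\[
\frac{d}{dt}\big(P(E(t))-P_d\big) \leq -\frac{c}{C_0}\big(P(E(t))-P_d\big),
\]
hence exponential decay of the perimeter gap. Exponential decay of the symmetric-difference distance then follows by combining this gap estimate with the quantitative closeness to a union of disks from Proposition~\ref{prop:2DAle} (cf.\ \cite{FJM}). To single out one limit set $E_\infty$ I would use that the trajectory has finite $H^{-1}$-length: since $\|V_t\|_{H^{-1}}^2=\int_{\T^2}|\nabla u_t|^2$ decays exponentially, $\int_0^\infty\|V_t\|_{H^{-1}}\,dt<\infty$, so $\chi_{E(t)}$ is Cauchy in $H^{-1}$ and, together with the uniform perimeter bound, converges in $L^1$, freezing the centers $x_1,\dots,x_d$. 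The dichotomy ``$E_\infty$ or its complement'' and the hypothesis $P_{\T^2}(E)<2$ enter through the classification of constant-curvature critical sets in $\T^2$ (equal disks, their complements, or lamellae winding around the torus): perimeter monotonicity and $P_{\T^2}(E(0))<2$ rule out the lamellar case, leaving the disk/complement alternative according to which phase carries the small volume.

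I expect the main obstacle to be the \emph{eventual regularity and structure} step: showing that the flat flow becomes smooth after finite time and is, uniformly for large $t$, a disjoint union of nearly round, well-separated components to which the Steklov lower bound and Theorem~\ref{cor:2DAle} apply. This demands quantitative density and regularity estimates for the incremental minimizers, control of the nonlocal field $u_t$ along the discrete scheme, exclusion of components vanishing or colliding at a sequence of times, and a preliminary qualitative convergence to a union of disks in the spirit of \cite{JN}. Making both the dissipation identity and the spectral lower bound rigorous at the level of the minimizing-movements limit, where $u_t$ and the interface are only weakly controlled, is the principal technical difficulty and is the counterpart here of the regularity analysis underlying Theorem~\ref{thm2}.
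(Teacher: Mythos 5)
Your overall architecture matches the paper's: monotonicity of the perimeter along the scheme, a quantitative-Alexandrov functional inequality that closes the dissipation into exponential decay (implemented at the discrete level via the iteration Lemma~\ref{an1} rather than a literal ODE), conversion of the decaying dissipation into an $H^{-1}$-Cauchy estimate and then into $L^1$ via interpolation with the BV bound (Lemma~\ref{lem:luckhaus}), identification of the limit through Proposition~\ref{prop:2DAle2}, and the reduction to $|E(0)|\le \tfrac12$ by passing to the complement. However, there is a genuine gap at the crucial step, namely how the Dirichlet energy $\int_{\T^2}|\nabla u|^2$ is shown to dominate $\|\kappa_E-\okappa_E\|_{L^2(\pa E)}^2$. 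Your proposed mechanism is a two-sided Steklov (Dirichlet-to-Neumann) eigenvalue lower bound, which is only available \emph{after} you know that $\pa E$ is a union of nearly round, well-separated circles with radii bounded above and below. But that structural information is exactly what the functional inequality is supposed to deliver (via Proposition~\ref{prop:2DAle}), so the argument as proposed is circular: the inequality must hold for an arbitrary $C^3$ critical set of the incremental problem with small Dirichlet energy, before any closeness to disks is known. You acknowledge this as the ``main obstacle'' and defer it to a preliminary qualitative convergence in the spirit of \cite{JN}, but no such statement is available for the Mullins--Sekerka flat flow, and nothing in your outline supplies it.

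The paper's resolution is a different and essential ingredient that your proposal is missing: Lemma~\ref{lem:schatzle} establishes, for \emph{any} admissible critical set with $P_{\T^2}(E)<2$ and bounded Dirichlet energy, a uniform density bound $\H^1(\pa E\cap D_\rho(x))\le K\rho$ (after localizing each component in a cube to control the Lagrange multiplier — this is also where the hypothesis $P_{\T^2}(E)<2$ does its real work, beyond excluding lamellae). This density bound feeds into the Meyers--Ziemer trace inequality \eqref{eq:MZ} applied to $\varphi=u_E^2$, giving $\int_{\pa E}u_E^2\,d\H^1\le C\|\nabla u_E\|_{L^2(\T^2)}^2$ directly, with no a priori geometric structure on $\pa E$; Proposition~\ref{prop:2DAle} then applies and the closeness to disks comes out as a \emph{conclusion}. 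Without this (or some equivalent trace estimate on an a priori uncontrolled boundary), your differential inequality for $P(E(t))-P_d$ cannot be started, and the subsequent exponential decay, the $H^{-1}$ length bound, and the identification of $E_\infty$ all remain conditional.
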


The proof of Theorem \ref{thm3} is similar to that of the previous theorem. We use Theorem \ref{cor:2DAle} and a result by Sch\"atzle  \cite{Sch} to obtain a functional inequality (see Corollary \ref{coro:2Dale2}), which is in the spirit  of the quantitative Alexandrov theorem, stated now in terms of the potential $u_t$.

We remark that one could also consider the one-phase model for the Mullins-Sekerka as in \cite{CL} in the whole $\R^2$ and expect the above convergence to hold also in this case. We also expect the convergence of the sets in Theorem \ref{thm3} to hold with respect to Hausdorff distance but we do not prove it here. 

\subsection{Structure of the paper}

Section~\ref{sec:2} is purely geometric and in Proposition~\ref{prop:2DAle} we prove our quantitative version of the Alexandrov theorem which then implies  Theorem~\ref{cor:2DAle} as a corollary. In Section~\ref{sec:3} we first introduce the incremental minimization problem for the minimizing movements scheme, and recall some basic results related to its minimizers. Then we recall the construction of the flat flow and  give the proof of Theorem~\ref{thm2} at the end of the section. In Section~\ref{sec:4} we introduce the incremental  minimization problem and the flat flow for the Mullins-Sekerka equation. We then state  and prove in Proposition~\ref{prop:2DAle2} a crucial functional inequality  which is related to Proposition~\ref{prop:2DAle}. The section concludes with the proof of Theorem \ref{thm3}.

\medskip



\section{A sharp quantitative Alexandrov theorem in two-dimensions}\label{sec:2}

Let us first recall that for measurable sets $E\subset \R^2$, the perimeter is defined by
\[
P(E) := \sup \Big{\{} \int_E \div X \, dx : X \in C_c^1(\R^2,\R^2) , \, \| X\|_{L^\infty} \leq 1 \Big{\}}.
\]
If $P(E) < \infty$ we say that $E$ is a set of finite perimeter. We also recall that if $E$ is regular enough, say a domain with Lipschitz boundary, then $P(E) = \H^1(\pa E)$.  For the general properties of sets of finite perimeter we refer to the monographs \cite{AFP, MaggiBook}.

In the following we fix the prescribed area $m>0$ of a set $E$ and a constant $M>0$ representing an upper bound for the perimeter of $E$. For $d\in \N$ we denote by $P_d$ the perimeter of any union of $d$ disjoint disks with equal areas $m/d$, i.e.,
\[
P_d:=2\sqrt{\pi m d}\,.
\]
For a set of $E \subset \R^2$ of class $C^2$ we denote by $\kappa_E$ its curvature (with the sign defined so that $\kappa_E$ is positive for convex sets) and we set 
\[
\okappa_E:=\medint_{\pa E}\kappa_E\, d\H^1= \frac1{\H^1(\pa E)} \int_{\pa E} \kappa_E\, d\H^1.
\]
In \cite{FJM} it is proven that if  $E \subset \R^2$ is a set of class $C^2$ with area $|E|=m$ and $\|\kappa_E-\okappa_E \|^2_{L^1(\pa E)} \leq \e_0$, for $\e_0$ small enough,  then $E$ is $C^1$-diffeomorphic to a disjoint union of disks $D_1, \dots, D_d$ and it holds 
\[
|P(E) - P_d| \leq C \|\kappa_E-\okappa_E \|_{L^1(\pa E)}\,.
\]
Our first result improves the above inequality by showing that a similar estimate holds with quadratic right-hand side, which is the optimal scaling of the quantitative Alexandrov theorem.
We also consider $L^2$-norms as this is more natural in our variational framework.  We state this in the following proposition. 
\begin{proposition}\label{prop:2DAle}
Let $m, M>0$.
There exist $\e_0=\e_0(m,M)\in (0,1)$ and $C_0=C_0(m, M)>1$ with the following property:  Let $E\subset\R^2$ be a bounded open set of class $C^2$, with $|E|=m$ and  $P(E)\leq M$, such that  $\|\kappa_E-\okappa_E \|_{L^2(\pa E)}\leq \e_0$. Then 
$E$ is diffeomorphic to a union of $d$ disjoint disks $D_1$, \dots, $D_{d}$, with equal areas $m/{d}$ and $\dist(D_i, D_j)>0$ for $i\neq j$, and
\beq\label{eq:geoineq}
|P(E)- P_{d}|\leq C_0\|\kappa_E-\okappa_E \|^2_{L^2(\pa E)}\,.
\eeq
Moreover, $d$ is bounded from above by a constant depending only on $m,\,M$. 

Finally, for $\e_0$ sufficiently small, the boundary of every connected component of the set $E$ can be parametrized as a normal graph over one of the discs $D_i$ with $C^{1,\frac12}$ norm of the parametrization vanishing as $\e_0\to 0$.
\end{proposition}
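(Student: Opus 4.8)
The plan is to reduce everything to a one-dimensional analysis on each connected component of $\pa E$, using that in the plane the boundary of a bounded open $C^2$ set is a finite disjoint union of $C^2$ Jordan curves $\Gamma_1,\dots,\Gamma_k$, at mutually positive distance. First I would record that Cauchy--Schwarz together with $P(E)\leq M$ gives $\|\kappa_E-\okappa_E\|_{L^1(\pa E)}\leq \sqrt{M}\,\e_0$, so the curvature is also $L^1$-close to the constant $\okappa_E$; invoking \cite{FJM} then provides, for $\e_0$ small, the qualitative skeleton of the statement: $E$ is $C^1$-diffeomorphic to a disjoint union of $d$ topological disks, each component $\Gamma_i$ is a $C^1$-small normal perturbation of a round circle, and $\okappa_E>0$. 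The bound on $d$ can be obtained directly: by the theorem of turning tangents $\int_{\Gamma_i}\kappa_E\,d\H^1=2\pi$ on each component (none can be a hole, since the sign of $\int_{\Gamma_i}\kappa_E\approx\okappa_E\,\H^1(\Gamma_i)$ must agree with that of $\okappa_E>0$), so $L_i:=\H^1(\Gamma_i)\approx 2\pi/\okappa_E$; combining $\sum_i L_i=P(E)\leq M$ with $A_i\leq L_i^2/4\pi$ and $\sum_i A_i=m$ forces $\okappa_E\leq M/2m$ and $d\leq M^2/4\pi m$, and in particular $L_i$ is bounded above and below by positive constants depending only on $m,M$.

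The core of the proof is a quantitative circularity estimate on each component, with the optimal quadratic $L^2$-dependence. Writing $A_i$ for the area enclosed by $\Gamma_i$ and $\okappa_i:=2\pi/L_i$ for its constant mean curvature, I would parametrize $\Gamma_i$ by arc length, introduce the tangent angle $\theta$ (so that $\dot\theta=\kappa_E$ and $\int\dot\theta=2\pi$), and set $\phi:=\theta-\okappa_i\,s$, which is $L_i$-periodic with $\dot\phi=\kappa_E-\okappa_i$ of zero average. After subtracting the average of $\phi$ (a harmless rotation leaving $L_i$ and $A_i$ unchanged), a Wirtinger/Fourier computation of the enclosed area in terms of $\phi$, together with the Poincar\'e inequality $\|\phi\|_{L^2}\leq C\|\dot\phi\|_{L^2}$, yields the deficit bound $L_i^2-4\pi A_i\leq C\int_{\Gamma_i}(\kappa_E-\okappa_i)^2\,d\H^1$, with $C$ uniform because $L_i$ is bounded above and below. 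To replace the component averages $\okappa_i$ by the global $\okappa_E$ I would use $\okappa_i-\okappa_E=\frac1{L_i}\int_{\Gamma_i}(\kappa_E-\okappa_E)$, so that Cauchy--Schwarz gives $L_i(\okappa_i-\okappa_E)^2\leq\int_{\Gamma_i}(\kappa_E-\okappa_E)^2$ and hence $\int_{\Gamma_i}(\kappa_E-\okappa_i)^2\leq 4\int_{\Gamma_i}(\kappa_E-\okappa_E)^2$; summing over $i$ produces $\sum_i(L_i^2-4\pi A_i)\leq C\|\kappa_E-\okappa_E\|_{L^2(\pa E)}^2$.

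It then remains a finite-dimensional matter to deduce \eqref{eq:geoineq}. Since $L_i+2\sqrt{\pi A_i}$ is bounded below, the deficit bound gives $L_i\leq 2\sqrt{\pi A_i}+C\delta_i$ with $\delta_i:=\int_{\Gamma_i}(\kappa_E-\okappa_i)^2$, and summing together with the concavity inequality $\sum_i\sqrt{A_i}\leq\sqrt{md}$ (equality iff all $A_i=m/d$) yields $P(E)\leq P_d+C\|\kappa_E-\okappa_E\|_{L^2}^2$. For the opposite inequality I would combine $L_i\geq 2\sqrt{\pi A_i}$ with the second-order expansion $\sqrt{md}-\sum_i\sqrt{A_i}\leq C\sum_i(A_i-m/d)^2$; the variance $\sum_i(A_i-m/d)^2$ is itself controlled by $\|\kappa_E-\okappa_E\|_{L^2}^2$, because the smallness of $\sum_i L_i(\okappa_i-\okappa_E)^2$ forces the lengths $L_i=2\pi/\okappa_i$, hence the near-circular areas $A_i$, to be mutually close. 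This gives $P(E)\geq P_d-C\|\kappa_E-\okappa_E\|_{L^2}^2$ and proves \eqref{eq:geoineq}.

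Finally, the $C^{1,\frac12}$ graph statement comes from the same tangent-angle representation: since $\dot\phi=\kappa_E-\okappa_i$ is small in $L^2$ and has zero average, the one-dimensional embedding $W^{1,2}\hookrightarrow C^{0,\frac12}$ shows that $\phi$, and therefore the tangent direction of $\Gamma_i$, is $C^{0,\frac12}$-close to that of a round circle, with modulus controlled by $\e_0$; integrating $(\cos\theta,\sin\theta)$ then exhibits $\Gamma_i$ as a $C^{1,\frac12}$-small perturbation of a circle of radius $1/\okappa_i$, and since $1/\okappa_i$ differs from $r=\sqrt{m/(\pi d)}$ by a quantity vanishing with $\e_0$, each $\Gamma_i$ is a normal graph over a translate $D_i$ of the disk of radius $r$ whose graph has $C^{1,\frac12}$-norm vanishing as $\e_0\to0$; the mutual positive distance of the $D_i$ is inherited from that of the $\Gamma_i$ for $\e_0$ small. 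The main obstacle is precisely the per-component quadratic circularity estimate: obtaining the \emph{quadratic}, rather than linear, dependence on the $L^2$-oscillation (as opposed to the $L^1$-bound of \cite{FJM}), while keeping all constants uniform in the (a priori unknown but bounded) number of components.
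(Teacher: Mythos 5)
Your proposal is correct and shares the overall architecture of the paper's proof (reduce to the connected components, use the arc-length/tangent-angle parametrization $\theta(s)=\int_0^s\kappa$ to show each boundary curve is a $C^{1,\frac12}$-small perturbation of a circle, then recombine the per-component estimates by comparing the radii of $d$ circles whose squared radii have a fixed sum), but it replaces the paper's key quantitative ingredient with a different one. The paper obtains the quadratic per-component estimate by first writing $\pa E_i$ as a normal graph $f_i$ over the equal-area disk $D_{r_i}(x_i)$ and then invoking the quantitative Alexandrov theorem of \cite{MoPoSpa} (a Fuglede-type bound $\|f_i\|^2_{H^1}\leq C\|\kappa_{E_i}-\okappa_{E_i}\|^2_{L^2}$) together with a linearization of the area formula to get $0\leq P(E_i)-P(D_{r_i}(x_i))\leq C\|f_i\|^2_{H^1}$. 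You instead bound the isoperimetric deficit $L_i^2-4\pi A_i$ directly by a Fourier/Wirtinger computation on the unit tangent $e^{i\theta}=e^{i(\okappa_i s+\phi)}$: writing $e^{i\phi}=\sum_k c_ke^{2\pi iks/L_i}$, closedness gives $c_{-1}=0$ and the deficit equals $L_i^2\sum_{k\neq 0,-1}|c_k|^2\tfrac{k}{k+1}$, which is controlled by $\|\phi-\bar\phi\|^2_{L^2}$ (even without linearizing, since $|e^{i\phi}-e^{i\bar\phi}|\leq|\phi-\bar\phi|$ and $c_0$ minimizes the $L^2$-distance to constants) and hence, by Poincar\'e, by $\|\kappa_E-\okappa_i\|^2_{L^2(\Gamma_i)}\leq\|\kappa_E-\okappa_E\|^2_{L^2(\Gamma_i)}$. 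This is self-contained and avoids the external quantitative Alexandrov theorem, at the price of then having to convert deficit bounds into perimeter bounds: your finite-dimensional recombination via $\sum_i\sqrt{A_i}\leq\sqrt{md}$ with deficit $\sum_{i<j}(\sqrt{A_i}-\sqrt{A_j})^2$, and the control of $|L_i-2\pi/\okappa_E|$ by $\|\kappa_E-\okappa_E\|_{L^2(\Gamma_i)}$, is the exact analogue of the paper's Step 7 and closes the argument with the correct quadratic scaling. The preliminary structural facts you import from \cite{FJM} (simple connectivity of the components, positivity and boundedness of $\okappa_E$, upper bound on $d$, lower bound on the $L_i$) are established from scratch in the paper's Steps 1--4, but your derivations of them are sound.
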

\begin{proof}
Let $E$ be as in the statement and let $E_1$,\dots, $E_{d}$ be the collection of its connected components. For each component $E_i$ we denote by $\Gamma_i$ the outer component of $\pa E_i$ and by $\hat E_i$ the bounded region  enclosed by $\Gamma_i$, i.e., the set obtained by filling the ``holes'' of $E_i$.

We split the proof into several steps. 
Notice that in what follows $m_0\in (0,1)$ and $M_0>1$ will denote ``universal'' constants, i.e, constants depending only on $m, M$, which may change from line to line. 

\medskip

\noindent{\bf Step 1.} We claim that 
\beq\label{eq:step1}
|\hat E_{\bar k}|\geq m_0\quad\text{for some }{\bar k}\in \{1, \dots, d\}.
\eeq
Indeed, by translating the components if necessary we may assume that $\dist (\hat E_i, \hat E_j)>\sqrt 2$.
Setting $Q:=(0,1)\times(0,1)$, we may use  \cite[Lemma\til2.1]{MoPoSpa}
to infer that there exist $z\in \Z^2$ such that 
$$
|E\cap (z+Q)|\geq c \min\Big\{\frac{m^2}{M^2}, 1\Big\}\,,
$$
with $c>0$ a universal constant. Since $z+Q$ can only intersect one component $\hat E_i$, the claim follows. 

\medskip

\noindent{\bf Step 2.} We claim that
\beq\label{eq:step2}
|\okappa_E|\leq M_0\,.
\eeq
 To this aim, note that by the Isoperimetric Inequality and by \eqref{eq:step1}, we have
\beq\label{st2-2}
\H^1(\pa \hat E_{\bar k})\geq 2\sqrt{\pi m_0}\,.
\eeq
Now, 
\[
\int_{\pa \hat E_{\bar k}}\Big|\kappa_{E}-\frac{2\pi}{\H^1(\pa \hat E_{\bar k})}\Big|^2\, d\H^1\leq \int_{\pa E}|\kappa_{E}-\okappa_E |^2\, d\H^1\leq \e_0^2\,,
\]
where we used the simply connectedness of $\hat E_{\bar k}$ and Gauss-Bonnet Theorem to get $\okappa_{\hat E_{\bar k}}=2\pi/\H^1(\pa \hat E_{\bar k})$. In turn, 
\[
\begin{split}
\frac1{\H^1(\pa \hat E_{\bar k})}&\Big|2\pi -\H^1(\pa \hat E_{\bar k})\okappa_{E}\Big|^2
=\int_{\pa\hat E_{\bar k}}\Big|\frac{2\pi}{\H^1(\pa \hat E_{\bar k})}-\okappa_E\Big|^2\, d\H^1 \\
&\leq 2\int_{\pa\hat E_{\bar k}}\Big|\kappa_{E}- \frac{2\pi}{\H^1(\pa \hat E_{\bar k})}\Big|^2\, d\H^1+ 2\int_{\pa E}\Big|\kappa_{E}- \okappa_E\Big|^2\, d\H^1\leq 4\e_0^2\leq 4\,.
\end{split}
\]
Hence $\Big|2\pi -\H^1(\pa \hat E_{\bar k})\okappa_{E}\Big| \le 2 \sqrt{\H^1(\pa \hat E_{\bar k})}$, so that, using also \eqref{st2-2},
$$
2\sqrt{\pi m_0}|\okappa_E|\leq 2\pi+|2\pi -\H^1(\pa \hat E_{\bar k})\okappa_{E}|\leq 
2\pi+2\sqrt{\H^1(\pa \hat E_{\bar k})}\leq 2\pi(1+\sqrt M)\,,
$$
and the claim follows. 

\medskip

\noindent{\bf Step 3.} We claim that 
\beq\label{eq:step3}
\H^1(\Gamma)\geq m_0 \qquad\text{for any component }\Gamma\text{ of }\pa E\,.
\eeq
Indeed, using again Gauss-Bonnet Theorem, 
\[
\begin{split}
M |\okappa_E|^2+1&\geq \H^1(\Gamma)|\okappa_E|^2+\e_0^2\geq 
\H^1(\Gamma)|\okappa_E|^2+\int_\Gamma|\kappa_E-\okappa_E|^2\, d\H^1\\
&\geq\frac12\int_\Gamma|\kappa_E|^2\, d\H^1\geq\frac1{2\H^1(\Gamma)}
\Big(\int_\Gamma \kappa_E\Big)^2=\frac1{2\H^1(\Gamma)}4\pi^2\,,
\end{split}
\]
and the claim follows taking into account \eqref{eq:step2}.

\medskip

\noindent{\bf Step 4.} We claim that if $\e_0$ is sufficiently small, then $E$ has $d\leq M_0$ connected components which are simply connected.

We argue by contradiction. Suppose there exists a connected component $E_i$ which is not simply connected. Then there exists a component $\Gamma\subset \pa E$ contained in $\hat E_i$ such that $\int_{\Gamma} \kappa_{E} \, d \H^1 = -2 \pi$.
We observe that then it holds
\[
\medint_\Gamma \kappa_E d \H^1 = - \frac{2\pi}{\H^1(\Gamma)} \qquad\text{and} \qquad 
\medint_{\pa \hat E_i} \kappa_E d \H^1 =  \frac{2\pi}{\H^1(\pa \hat E_i)}
\]
and therefore
\[
\int_\Gamma \Big|\kappa_E + \frac{2\pi}{\H^1(\Gamma)} \Big|^2 d\H^1 + 
\int_{\pa\hat E_i}\Big|\kappa_E-\frac{2\pi}{\H^1(\pa \hat E_i)}\Big|^2 d\H^1 \leq 2\int_{\pa E} |\kappa_E - \okappa_E|^2 d \H^1\leq 2 \e_0^2.
\]
We then infer that by \eqref{eq:step3}
\begin{align*}
\frac{16\pi^2}{M^2} &\leq 
\left\vert \frac{2\pi}{\H^1(\Gamma)}+\frac{2\pi}{\H^1(\pa \hat E_i)} \right\vert^2 
\leq 
2\left\vert \okappa_E+\frac{2\pi}{\H^1(\Gamma)}\right\vert^2 +  2\left\vert\okappa_E-\frac{2\pi}{\H^1(\pa \hat E_i)} \right\vert^2 \\
& \leq 2\medint_\Gamma \left\vert \kappa_E+\frac{2\pi}{\H^1(\Gamma)}\right\vert^2 d \H^1 + 2\medint_{\pa \hat E_i} \left\vert\kappa_E-\frac{2\pi}{\H^1(\pa \hat E_i)} \right\vert^2 d\H^1\leq \frac{4\e_0^2}{m_0^2}.
\end{align*}
Therefore, for $\e_0$ sufficiently small we reach a contradiction.  

Every component of  $E$ is  thus simply connected and by \eqref{eq:step3} their perimeter is bounded from below. Therefore the number $d$ of the components is bounded from above $d \leq M_0$.  Note that in particular $\okappa_E = \frac{2\pi d}{\H^1(\pa E)}$.

\medskip

\noindent{\bf Step 5.} Let us show that if $\e_0$ is sufficiently small, then each connected component $E_i$ is a nearly spherical set, parametrized over a disks $D_{r_i}(x_i)$ with $|D_{r_i}(x_i)|= |E_i|$  and the $C^{1,\frac12}$ norm of the parametrization is infinitesimal with $\e_0 \to 0$. 

We adapt the argument of \cite[Lemma 3.2]{FJM}.
Let us fix a component $E_i$ and denote its perimeter by $l_i$, i.e. $H^1(\pa E_i) = l_i$. By Gauss-Bonnet it holds $\okappa_{E_i} = \frac{2 \pi}{l_i}$.
Since the boundary $\pa E_i$ is connected we may parametrize it by a unit speed curve  $\gamma : [0, l_i] \to \R^2$,  $\gamma(s) = (x(s),y(s))$ with counterclockwise orientation. Define $\theta(s) := \int_{0}^s \kappa_{E_i}(\gamma(\tau)) \, d \tau $ so that 
$\theta(0) = 0$ and $\theta(l_i) = 2 \pi$.  Then, 
for every $0 \leq s_1 < s_2 \leq l_i$, it holds by H\"older's inequality
\beq \label{eq:step5-2}
\begin{split}
|\theta(s_2) - s_2\okappa_E- (\theta(s_1)-s_1\okappa_E)|&\leq
\int_{s_1}^{s_2} |\kappa_E-\okappa_E|\\
&\leq \|\kappa_E -\okappa_E\|_{L^2(\pa E)} |s_2- s_1|^{\frac12}\\
&\leq \e_0 |s_2- s_1|^{\frac12}.
\end{split}
\eeq
In particular, applying \eqref{eq:step5-2} to $s_1=0$ and $s_2=s\in[0,l_i]$ generic, we get
\begin{equation}\label{eq:step5-2.5}
\big|\theta(s) - s\okappa_E\big| \leq  M_0 \|\kappa_E-\okappa_{E} \|_{L^2(\pa E)} \leq M_0 \e_0,
\end{equation}
and for $s_2 = l_i$ it yields  
\beq\label{eq:step5-3}
\left|2 \pi - l_i\okappa_E\right| \leq M_0\|\kappa_{E} -\okappa_{E}\|_{L^2(\pa E)} \leq M_0 \e_0. 
\eeq 
By possibly rotating the set $E_i$ we have 
\[
x'(s) = - \sin \theta(s) \qquad \text{and} \qquad y'(s) = \cos \theta(s) \quad \text{ for all } s\in(0,l_i) .
\]
We obtain by \eqref{eq:step5-2.5} and \eqref{eq:step5-3} that 
\beq
\label{eq:step5-4}
\Big|x'(s)  +  \sin \big(\frac{2\pi s}{l_i}\big)  \Big| + \Big|y'(s)  -  \cos \big(\frac{2\pi s}{l_i}\big)  \Big| \leq  M_0\|\kappa_{E} -\okappa_{E}\|_{L^2(\pa E)}\leq M_0 \e_0
\eeq
for all $s \in [0,l_i]$. Integrating \eqref{eq:step5-4} we deduce that  there are numbers $a$ and $b$ such that 
\begin{align}
\label{eq:step5-5}
\Big|x(s) - a  -  \frac{l_i}{2\pi} \cos \big(\frac{2\pi s}{l_i}\big)  \Big| &+ \Big|y(s) - b  -  \frac{l_i}{2\pi} \sin \big(\frac{2\pi s}{l_i}\big)  \Big|\notag\\
&\leq M_0\|\kappa_{E} -\okappa_{E}\|_{L^2(\pa E)}  \leq M_0 \e_0
\end{align}
for all $s \in [0,l_i]$. 
We set $x_i = (a,b)$ and note that from \eqref{eq:step5-5} we infer that
\begin{equation}\label{eq:step5.5.5}
D_{\frac{l_i}{2\pi}-M_0 \|\kappa_{E} -\okappa_{E}\|_{L^2(\pa E)}}(x_i) \subset E_i \subset 
D_{\frac{l_i}{2\pi}+M_0 \|\kappa_{E} -\okappa_{E}\|_{L^2(\pa E)}}(x_i).
\end{equation}
In particular, if $r_i$ is chosen in such a way that $|E_i|=\pi r_i^2 =  |D_{r_i}(x_i)|$, then \eqref{eq:step5.5.5} yields
\begin{equation}\label{eq:step5-5.6}
\frac{l_i}{2\pi}-M_0 \|\kappa_{E} -\okappa_{E}\|_{L^2(\pa E)} \leq r_i \leq \frac{l_i}{2\pi}+M_0 \|\kappa_{E} -\okappa_{E}\|_{L^2(\pa E)},    
\end{equation}
and 
\begin{align}
\label{eq:step5-5.7}
\Big|x(s) - a  -  r_i \cos \big(\frac{2\pi s}{l_i}\big)  \Big| &+ \Big|y(s) - b  -  r_i \sin \big(\frac{2\pi s}{l_i}\big)  \Big|\notag\\
&\leq M_0\|\kappa_{E} -\okappa_{E}\|_{L^2(\pa E)}  \leq M_0 \e_0
\qquad \forall\;s\in[0,l_i].
\end{align}
By \eqref{eq:step5-4} and \eqref{eq:step5-5.7} the boundary of the component $E_i$ is parametrized by a small perturbation of the boundary of the disc $\pa D_{r_i}(x_i)$ given by $c:[0,l_i]\to \R^2$
with $c(s) = r_i (\cos(\frac{2\pi s}{l_i}), \sin(\frac{2\pi s}{l_i}))$:
\begin{align}\label{eq:step5-5.8}
& \gamma(s)= c(s)+\sigma(s)\notag\\ & \|\sigma\|_{L^\infty}+\|\sigma'\|_{L^\infty}\leq M_0\|\kappa_{E} -\okappa_{E}\|_{L^2(\pa E)}  \leq M_0 \e_0.
\end{align}
Now it is a simple consequence of \eqref{eq:step5-2}, \eqref{eq:step5-4} and \eqref{eq:step5-5.8} to verify that
$\pa E_i$ is as nearly spherical sets over $D_{r_i}(x_i)$, with $|D_{r_i}(x_i)|=|E_i|$, by functions $f_i\in C^{1,1/2}(\pa D_{r_i}(x_i))$ with 
$C^{1,1/2}$ norm convering to zero as $\e_0\to 0$.

\medskip

\noindent{\bf Step 6.} Quantitative Alexandrov Theorem.

We use the quantitative Alexandrov theorem proven in \cite{MoPoSpa} to infer that, if $f_i$ is the parametrization of the component $E_i$,
then
\[
\|f_i\|_{H^1(\pa D_{r_i}(x_i))}^2 \leq C \| \kappa_{E_i} - \okappa_{E_i} \|_{L^2(\pa E_i)}^2.
\]
Recall that $r_i$ is such that $|E_i|=  |D_{r_i}(x_i)|$. By the area formula, see e.g. \cite[(1.3)]{MoPoSpa}, and a simple linearization we infer that 
\[
0 \leq P(E_i) - P(D_{r_i}(x_i))\leq C \|f_i\|_{H^1(\pa D_{r_i}(x_i))}^2.
\]
Summing over the connected components  yields
\beq
\label{eq:step6-1}
\begin{split}
\| \kappa_{E} - \okappa_{E} \|_{L^2(\pa E)}^2 &\geq \sum_{i=1}^d \| \kappa_{E_i} - \okappa_{E_i} \|_{L^2(\pa E_i)}^2
\geq c\sum_{i=1}^d \|f_i\|_{H^1(\pa D_{r_i}(x_i))}^2\\
&\geq c \,  \big| \sum_{i=1}^d P(E_i) - P(D_{r_i}(x_i))\big|.
\end{split}
\eeq

\medskip

\noindent{\bf Step 7.} Conclusion.

Let $r>0$ be such that the disk  $D_r(x_i)$ has  area  $|D_r(x_i)| = m/d$ , where $m = |E|$. In other words 
$\sum_{i=1}^d P(D_r(x_i)) = 2\pi\,r d = P_d$.  Recall that the disks $D_{r_i}(x_i)$ are defined such that $|D_{r_i}(x_i)| = |E_i|$ for every component $E_i$ and 
thus 
\begin{equation}\label{eqagg}
\sum_i^d r_i^2 = d r^2.
\end{equation}  
Recall also that by the previous estimates it holds $m_0 \leq r_i, r,d \leq M_0$. 
By  \eqref{eq:step5.5.5} and \eqref{eq:step5-5.6} we infer that 
\begin{equation}\label{eq:step7-1}
D_{r_i-M_0 \|\kappa_{E} -\okappa_{E}\|_{L^2(\pa E)}}(x_i) \subset E_i \subset 
D_{r_i +M_0 \|\kappa_{E} -\okappa_{E}\|_{L^2(\pa E)}}(x_i).
\end{equation}
and therefore
\begin{equation}\label{eq:step7-2}
|r-r_i|\leq M_0 \|\kappa_{E} -\okappa_{E}\|_{L^2(\pa E)}.
\end{equation}

Thus, by simple algebra, by \eqref{eqagg} and by \eqref{eq:step5-5.6}, if $d>1$ we deduce
\[
\begin{split}
\big|P_d  -  \sum_{i=1}^d  P(D_{r_i}(x_i))\big| &= 2 \pi \big|d \, r-  \sum_{i=1}^d r_i \big|   =2 \pi \left| \sqrt{d} \left(\sum_{i=1}^d r_i^2 \right)^{\frac12} -  \sum_{i=1}^d r_i \right| \\
&\leq M_0\left( d \sum_{i=1}^d r_i^2  - \left(\sum_{i=1}^d r_i \right)^{2} \right) \\
&= M_0  \sum_{1\leq i<j\leq d} (r_i-r_j)^2\\
&\leq C  \sum_{i=1}^d (r_i-r)^2 \stackrel{\eqref{eq:step7-2}}{\leq} M_0\|\kappa_{E} -\okappa_{E}\|_{L^2(\pa E)}^2.  
\end{split}
\]
Hence, the inequality \eqref{eq:geoineq} then follows by combining the above estimate with \eqref{eq:step6-1}. 
Finally, by the very same argument of step 5 and by \eqref{eq:step7-2} we deduce that the connected components $E_i$ can be parametrized as nearly spherical sets over the discs $D_r(x_i)$.
\end{proof}

Proposition \ref{prop:2DAle} immediately implies the sharp geometric inequality in the plane stated in Theorem \ref{cor:2DAle}. 

\begin{proof}[\textbf{Proof of Theorem \ref{cor:2DAle}}]
Let $\e_0>0$ be from Proposition \ref{prop:2DAle}. If $\|\kappa_E -\bar \kappa_E \|_{L^2(\pa E)} \leq \e_0$ then the inequality holds by  Proposition \ref{prop:2DAle}. If  $\|\kappa_E -\bar \kappa_E \|_{L^2(\pa E)} \geq \e_0$, then the inequality holds trivially as
\[
|P(E) - P_d|  \leq 3M \leq \frac{3M}{\e_0^2} \|\kappa_E -\bar \kappa_E \|_{L^2(\pa E)}^2. 
\]
The inequality \eqref{e.coroAlex} follows similarly. 
\end{proof}

%

\section{The asymptotics of the area preserving curvature flow in the plane}\label{sec:3}

Let us first introduce the setting for the construction of the flat flows. We use the notation from  \cite{MoPoSpa} and refer to \cite{MSS, MoPoSpa} for more detailed introduction.      
We denote the signed distance function by $d_E$ and define it as  
\[
d_E(x) = \dist(x, E) -  \dist(x, \R^2 \setminus E) .
\]
Then clearly $|d_E(x)| = \dist(x, \pa E)$. 

\medskip

We fix the volume $m>0$ and the time step $h>0$, and  given a bounded set $E$ we consider the minimization problem 
\beq
\label{def:min-prob}
\min  \Big{\{} P(F) + \frac{1}{h}\int_F  d_E \, dx + \frac{1}{\sqrt{h}}\big| |F| -m \big| \Big{\}}
\eeq
 and note that the minimizer exists but might not be unique.  We define the dissipation of a set $F$ with respect to a set $E$ as
 \beq
\label{def:distance}
\mathcal{D}(F,E) := \int_{F \Delta E} \dist(x,\pa E)\, dx 
\eeq
and observe that we may write the minimization problem \eqref{def:min-prob} as 
\[
\min  \Big{\{} P(F) + \frac{1}{h}\mathcal{D}(F,E)
 + \frac{1}{\sqrt{h}}\big| |F| -m \big| \Big{\}}.
\]
Let us then recall the construction of the flat flow for the volume preserving mean curvature flow  \eqref{eq:VMCF} from \cite{MSS}. 
Let $E(0) \subset \R^2$ be a bounded  set of finite perimeter which coincides with its Lebesgue representative.  We fix a minimizer of \eqref{def:min-prob}, with $E= E(0)$, denote it by $E_1^{(h)}$ and 
consider its Lebesgue representative.  We construct  the discrete-in-time evolution $\{E_k^{(h)}\}_{k\in \N}$ by recursion such that assuming that $E_k^{(h)}$ 
is defined we set $E_{k+1}^{(h)}$ to be a minimizer of \eqref{def:min-prob} with $E= E_k^{(h)}$.  
By \cite[Lemma 3.1]{MSS}  it holds for all $k =0,1, \dots $
 \beq
\label{eg:energy-compa}
P(E_{k+1}^{(h)}) + \frac{1}{\sqrt{h}}\big||E_{k+1}^{(h)}| -m \big|  + \frac{1}{h}\mathcal{D}(E_{k+1}^{(h)},E_k^{(h)})   \leq P(E_{k}^{(h)}) + \frac{1}{\sqrt{h}}\big||E_{k}^{(h)}| -m \big|  . 
\eeq
Also the set $E_{k+1}^{(h)}$ is $C^{2,\alpha}$-regular and satisfies the Euler-Lagrange equation
 \beq
\label{eg:Euler-Lag}
\frac{d_{E_k^{(h)}}}{h} = - \kappa_{E_{k+1}^{(h)}} + \lambda_{k+1}^{(h)} \qquad \text{on }\, \pa E_{k+1}^{(h)},
\eeq
in the classical sense, where $\lambda_{k+1}^{(h)}$ is the Lagrange multiplier due to the volume penalization. Finally we define the approximative flat flow  $\{E^{(h)}(t)\}_{t \geq 0}$ by setting
\[
E^{(h)}(t) = E_k^{(h)} \qquad \text{for }\, t \in [kh, (k+1) h).
\]
We define a flat flow solution of \eqref{eq:VMCF}  to be any  family of sets $\{E(t)\}_{t \geq 0}$ which is a cluster point of $\{E^{(h)}(t)\}_{t \geq 0}$, i.e., 
\[
E^{(h_n)}(t) \to E(t) \quad \text{as } \, h_n \to 0 \quad \text{in } \, L^1 \quad \text{for almost every }\, t >0 .
\]
By \cite[Theorem 2.2]{MSS} there exists a flat flow starting from $E(0)$ such that $P(E(t)) \leq P(E(0))$ and $|E(t)| = m$ for every $t \geq 0$.

We are interested in the long time behavior of the flow. To this aim we need two technical lemmas. The first lemma is  algebraic.

\begin{lemma}\label{an1}
Let $K\in \N$ and $\{a_k\}_{k\in \{1,\ldots, K\}}$ be a sequence of non-negative numbers and let $\mathcal{I}\subset\{1, \ldots, K\}$.
Assume  that there exists $c>1$ such that 
\[
\sum_{k=i}^{K}a_k\leq ca_i
\]
for every $i\in\{1,\ldots, K\}\setminus \mathcal{I}$.
Then,
$$
\sum_{k=i+1}^{K}a_k\leq \Bigl(1-\frac1c\Bigr)^{i-|\mathcal{I}|}S
$$
for every $i\in \{1,\ldots, K\}$, where $S:=\sum_{k=1}^{K}a_k$ and $|\mathcal{I}|$ denotes the cardinality of $\mathcal{I}$.
\end{lemma}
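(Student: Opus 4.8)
The plan is to reformulate the hypothesis as a one-step contraction for the tail sums and then iterate. I would set $S_i := \sum_{k=i}^{K} a_k$, so that $S_1 = S$, $S_{K+1}=0$, and $a_i = S_i - S_{i+1}$. The first step is to rewrite the assumption in terms of these tails: for $i \in \{1,\dots,K\}\setminus\mathcal{I}$ the bound $S_i \le c\,a_i = c\,(S_i - S_{i+1})$ rearranges to $S_{i+1} \le (1 - \tfrac{1}{c})\,S_i$. On the other hand, since every $a_k \ge 0$, the tails are non-increasing, so $S_{i+1} \le S_i$ holds for \emph{every} $i$, and in particular for the ``bad'' indices $i \in \mathcal{I}$ where no contraction is available.

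Writing $\beta := 1 - \tfrac{1}{c} \in (0,1)$, the second step is to chain these one-step estimates from $S_1$ down to $S_{i+1}$. Each ``good'' index $j \in \{1,\dots,i\}\setminus\mathcal{I}$ contributes a factor $\beta$, while each ``bad'' index $j \in \{1,\dots,i\}\cap\mathcal{I}$ contributes a factor at most $1$. Multiplying the one-step bounds therefore gives $S_{i+1} \le \beta^{\,g_i}\,S_1$, where $g_i := i - |\{1,\dots,i\}\cap\mathcal{I}|$ counts the good indices up to $i$.

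The final step is a counting-and-monotonicity argument. Since $|\{1,\dots,i\}\cap\mathcal{I}| \le |\mathcal{I}|$, we have $g_i \ge i - |\mathcal{I}|$, and because $\beta < 1$ a larger exponent only decreases the power, so $\beta^{\,g_i} \le \beta^{\,i - |\mathcal{I}|}$ as soon as $i - |\mathcal{I}| \ge 0$; together with $S_{i+1}\le\beta^{\,g_i}S$ this yields the claimed bound. The one place that needs a moment's care is the regime $i \le |\mathcal{I}|$, where the exponent $i - |\mathcal{I}|$ is non-positive and hence $\beta^{\,i-|\mathcal{I}|} \ge 1$: there the asserted inequality reduces to the trivial $S_{i+1} \le S = S_1$, which already follows from monotonicity of the tails. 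I expect no genuine obstacle in this lemma; the only subtlety is precisely this handling of the indices in $\mathcal{I}$ (both the absence of contraction at those steps and the non-positive-exponent case), while the heart of the argument is the contraction estimate $S_{i+1} \le \beta S_i$ off $\mathcal{I}$ and the bound $g_i \ge i - |\mathcal{I}|$.
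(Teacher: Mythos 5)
Your proposal is correct and follows essentially the same route as the paper: rewrite the hypothesis as the tail-sum contraction $S_{i+1}\le(1-\tfrac1c)S_i$ off $\mathcal{I}$, use monotonicity $S_{i+1}\le S_i$ on $\mathcal{I}$, and iterate while counting that at least $i-|\mathcal{I}|$ contraction steps occur among the first $i$ indices. Your explicit treatment of the non-positive-exponent case $i\le|\mathcal{I}|$ is a detail the paper leaves implicit, but the argument is the same.
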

\begin{proof}
Set $F(i):=\sum_{k=i}^{K}a_k$ and note that  by assumption $F(i)\leq c(F(i)-F(i+1))$ for every $i\in\{1,\ldots, K\}\setminus \mathcal{I}$. 
Hence, we have
$$
F(i+1)\leq 
\begin{cases}
 \Bigl(1-\frac1c\Bigr)F(i) & \textup{if } i \not\in \mathcal{I},\\
 F(i) & \textup{if } i \in \mathcal{I}.
\end{cases}
$$
By iterating the previous estimate (note that at least $K-|\mathcal{I}|$ times the first instance must hold), we conclude.
\end{proof}

The second lemma is in the spirit of Ekeland variational principle.

\begin{lemma}\label{lem:eke}
Let $d \in \N$ and $D_r(x_1), \dots, D_r(x_d)$ be disjoint disks and denote $F =  \bigcup_{i=1}^d D_r(x_i)$. Then there is a constant $C$, which depends only on $d$ and $r$, such that for every set of finite perimeter $E \subset \R^2$ it holds 
\[
P(F) \leq P(E) + C |E\Delta F|^{\frac13}.
\]
\end{lemma}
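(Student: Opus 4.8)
The plan is to prove the equivalent bound $P(F)-P(E)\le C\,t^{1/3}$, where $t:=|E\Delta F|$, by a calibration argument. First I would dispose of the trivial regime: if $|E|=\infty$ then $t=\infty$ and there is nothing to prove, so I may assume $|E|<\infty$; and if $t\ge t_0$ for a threshold $t_0=t_0(d,r)$, the inequality holds with a large $C$ simply because $P(F)=2\pi r d$ is a fixed constant depending only on $d,r$. Thus the entire content is the estimate of $P(F)-P(E)$ when $t$ is small, i.e. when $E$ is $L^1$-close to $F$.

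The core of the argument is to construct an admissible calibration for $F$: a compactly supported Lipschitz field $X:\R^2\to\R^2$ with $\|X\|_{L^\infty}\le1$ and $X=\nu_F$ on $\pa F$. For such an $X$ one has $\int_F\div X\,dx=\int_{\pa F}X\cdot\nu_F\,d\H^1=P(F)$, while for every set of finite perimeter $E$ the Gauss--Green formula together with $\|X\|_{L^\infty}\le1$ gives $\int_E\div X\,dx=\int_{\pa^*E}X\cdot\nu_E\,d\H^1\le P(E)$. Subtracting,
\[
P(F)-P(E)\le\int_F\div X\,dx-\int_E\div X\,dx=\int_{F\setminus E}\div X\,dx-\int_{E\setminus F}\div X\,dx .
\]
I would build $X$ from the outward unit radial fields $\tfrac{y-x_i}{|y-x_i|}$ attached to each disc $D_r(x_i)$ — these equal $\nu_F$ on $\pa D_r(x_i)$ and have divergence $\tfrac{1}{|y-x_i|}\ge0$ — truncated inside a small ball about each centre $x_i$ (to kill the $1/|y-x_i|$ singularity) and tapered to $0$ away from $F$ (to make $X$ compactly supported).

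With this choice the two terms are controlled cleanly in the bulk. Since $|F\setminus E|,\,|E\setminus F|\le t$ and $\div X=\tfrac{1}{|y-x_i|}$ on the discs, the rearrangement (bathtub) inequality $\int_A\tfrac{dy}{|y-x_i|}\le 2\sqrt{\pi|A|}$, valid because the integrand is radially decreasing, yields $\int_{F\setminus E}\div X\le C\sqrt t$; and because $\div X\ge0$ wherever the field is genuinely radial, the term $-\int_{E\setminus F}\div X$ is favourable except in the neighbourhoods where the radial structure breaks down. Note that since $\sqrt t\le t^{1/3}$ for $t\le1$, any bound of the form $C\,t^{\alpha}$ with $\alpha\ge\tfrac13$ already implies the claimed inequality, so there is ample room to lose in the delicate regions.

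The main obstacle is precisely the construction of $X$ near pairs of discs that are tangent or nearly tangent, and doing so uniformly in the positions $x_i$, since the constant is allowed to depend only on $d,r$. There the outward normals of adjacent components are almost opposite, so $\nu_F$ jumps and the two radial fields cannot be glued while keeping $\|X\|\le1$; equivalently, the distance function to $\pa F$ develops a medial axis through the contact region that carries a singular part of its Laplacian. I would resolve this by mollifying the signed distance $d_F$ at a scale $\sigma$, which preserves $\|\nabla(d_F\ast\f_\sigma)\|_{L^\infty}\le1$ and makes $|\div X|\lesssim 1/\sigma$, and by giving up exact calibration on the short arcs of $\pa F$ lying within distance $\sigma$ of the contact points; balancing the resulting boundary defect against the concentration of $\div X$ near the contact set fixes $\sigma$ as a small power of $t$ and produces the cube-root rate. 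This tangency analysis, rather than the soft calibration inequality, is the heart of the proof.
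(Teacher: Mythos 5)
Your proposal is a correct strategy and, like the paper's proof, it is at bottom a calibration argument balanced against a regularization scale; the difference is where the regularization happens. The paper regularizes the \emph{set}: whenever $|x_i-x_j|\le 2r+\rho$ it joins the two discs by a thin neck (dilate by $\rho$, take the union, erode by $\rho$), producing a $C^{1,1}$ set $F_\rho\supset F$ satisfying interior and exterior ball conditions of radius $\rho$, at the cost $P(F)\le P(F_\rho)+C\sqrt{\rho}$ and $|F\Delta F_\rho|\le C\rho^{3/2}$; the standard field $\zeta\,\nabla d_{F_\rho}$ then has divergence bounded by $C/\rho$, giving $P(F)\le P(E)+C\rho^{-1}|E\Delta F|+C\sqrt{\rho}$, and the choice $\rho=\min\{|E\Delta F|^{2/3},r/10\}$ yields the cube root. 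You keep $F$ and regularize the \emph{field} by mollifying $d_F$ at scale $\sigma$, which is a workable alternative leading to the same balance, but one quantitative point must be corrected: the arcs of $\pa F$ on which the mollified gradient fails to reproduce $\nu_F$ are not those within distance $\sigma$ of a contact point, but those within distance of order $\sigma$ of the exterior medial axis (equivalently, of the neighbouring circle); since the gap between two tangent circles at arclength $s$ from the contact point is of order $s^2/r$, these arcs have length of order $\sqrt{r\sigma}$. Hence the calibration defect is $C\sqrt{r\sigma}$ --- the exact analogue of the paper's $C\sqrt{\rho}$ neck perimeter --- and it is this square root, balanced against the bulk term $C|E\Delta F|/\sigma$, that produces the exponent $\tfrac13$ rather than the $\tfrac12$ that your stated defect of order $\sigma$ would give; fortunately the larger, correct defect still yields the claimed rate. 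Finally, the first half of your construction (truncated radial fields and the bathtub inequality) becomes redundant once you pass to $X=\zeta\,\nabla(d_F\ast\f_\sigma)$: all that is needed is $\|X\|_{L^\infty}\le 1$, the boundary defect estimate, and $|\div X|\le C/\sigma$ on the support of $\zeta$ (which holds because $\Delta d_F$ has bounded absolutely continuous part away from the centres, which $\zeta$ excludes, and singular part of bounded one-dimensional density on the medial axis), after which $P(F)-C\sqrt{r\sigma}\le\int_F\div X\le\int_E\div X+C\sigma^{-1}|E\Delta F|\le P(E)+C\sigma^{-1}|E\Delta F|$ closes the proof with a constant depending only on $d$ and $r$.
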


\begin{proof}
 Let us fix a set $E$ and let $\rho \leq r/10$ be a positive number which choice will be clear later. We begin by constructing a set $F_\rho$ of class $C^{1,1}$, which contains the union of disks $F \subset F_\rho$,  satisfies interior and exterior ball condition with radius $\rho$ and 
\beq
\label{eq:eke1}
P(F) \leq P(F_\rho) +C \sqrt{\rho} , \quad \text{and }\quad |F \Delta F_\rho| \leq C \rho^{\frac32}.  
\eeq
Let $x_1, \dots, x_d$ be the centerpoints of the disks. If  it holds  $|x_i-x_j| > 2r +  \rho$ for every $i \neq j$ we simply choose $F_\rho = F$. If $|x_i-x_j| \leq 2r +  \rho$ for some $i \neq j$ we connect the disks $D_r(x_i)$ and $D_r(x_j)$ with a thin neck around the midpoint $(x_i +x_j)/2$ as follows. We first enlarge the disks by $\rho$ and consider the union $\tilde F_\rho^{ij} := D_{r+\rho}(x_i) \cup D_{r+ \rho}(x_j)$, which overlap around the midpoint  $(x_i +x_j)/2$ . We then decrease the union back by $\rho$ and define  
\[
F_{\rho}^{i,j} = \{ x \in \R^2 : \dist(x,\R^2 \setminus  \tilde F_\rho^{ij}) >\rho \} .
\]
Since $|x_i-x_j| \leq 2r +  \rho$,  the set $F_{\rho}^{i,j}$ is  connected and contains the disks $D_r(x_i)$ and $D_r(x_j)$. The part of the boundary of $F_{\rho}^{i,j} $, which is not contained in $\bar D_r(x_i) \cup \bar D_r(x_j)$, consists of two arcs, see Figure \ref{fig}. In particular, the set $F_{\rho}^{i,j}$  satisfies interior and exterior ball condition with radius $\rho$. We repeat the same construction for all disks  $D_r(x_i)$ and $D_r(x_j)$  which are close to each other in the sense that $|x_i-x_j| \leq 2r +  \rho$, and obtain $F_\rho$ which satisfies \eqref{eq:eke1}. 

\begin{figure}
\begin{tikzpicture}
\clip (-5, -3.3) rectangle (5, 3.3);
\begin{scope}[yscale=0.8,xscale=0.8]

\fill[draw=black, thin, fill=black!10!white] (3,0) circle (2.7); 
\fill[draw=black, thin, fill=black!10!white]   (-3,0) circle (2.7);

\filldraw[fill=black] (3,0) circle (.05);
\draw (3,0.2) node {$x_j$}; 

\filldraw[fill=black] (-3,0) circle (.05);
\draw (-3,0.2) node {$x_i$};

\draw   (-1.85,0.414)+(10:1.5) arc (200: 340: 0.4);

\begin{scope}[yscale=-1,xscale=1]
\draw   (-1.85,0.414)+(10:1.5) arc (200: 340: 0.4);
\end{scope}

\end{scope}                      
\end{tikzpicture}
\caption{If two disks are close to each other, we connect them with a neck given by two arcs}\label{fig}
\end{figure}
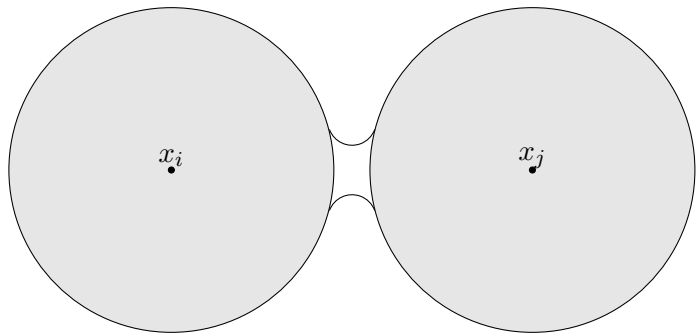

The rest of the proof  follows from standard calibration argument (see e.g. \cite[Proof of Theorem 4.3]{AFM}) and we only give the sketch of the argument.  We construct a vector field $X \in C^{1,1}(\R^2, \R^2)$ such that 
\[
X(x) = \nabla d_{F_\rho}(x) \zeta (x)
\]
where $0 \leq \zeta \leq 1$ is a smooth cut-off function such that $\zeta(x) = 1$ for $|d_{F_\rho}(x)| \leq \rho/4$, $\zeta(x)= 0$ for $|d_{F_\rho}(x)| \geq \rho/2$ and $|\nabla \zeta | \leq C/\rho$. In particular, it holds $|X|\leq 1$ in $\R^2$ and $X = \nu_{F_\rho}$ on $\pa F_\rho$. Moreover, since $F_\rho$ satisfies interior and exterior ball condition with radius $\rho$ it holds $|\Delta   d_{F_\rho}(x)| \leq C/\rho$ for $|d_{F_\rho}(x)| \leq \rho/2$. Therefore by the divergence theorem 
\[
P(F_\rho) - P(E) \leq  \int_{F_\rho \Delta E} |\div(X)|\, dx \leq \frac{C}{\rho} |F_\rho \Delta E| .
\]

We combine the above inequality with \eqref{eq:eke1} and deduce
\[
P(F) \leq P(E) + \frac{C}{\rho} |E \Delta F| + C \sqrt{\rho}.
\]
Choosing $\rho =\min\{ |E \Delta F|^{\frac23} , r/10\}$ yields the claim. 
\end{proof}

We may now give the proof of the convergence of the area-preserving mean curvature flow.   

\begin{proof}[\textbf{Proof of Theorem \ref{thm2}}]
Let $\{E(t)\}_{t\geq0}$ be an area-preserving flat flow and let $\{E^{(h_n)}(t)\}_{t\geq0}$ be an approximate
flow converging to $E(t)$.
Set
\[
f_n(t) = P(E^{(h_n)}(t)) + \frac{1}{\sqrt{h_n}} \big||E^{(h_n)}(t)|-m\big|.
\]
By \eqref{eg:energy-compa} the $f_n$'s are monotone non-increasing functions which are  bounded by $P(E(0))$. Therefore, by Helly's selection theorem,
up to passing to a further subsequence (not relabeled), the functions $f_n$'s converge pointwise to some non-increasing function $f_\infty:[0,+\infty)\to \R$.
Set $F_\infty= \lim_{t\to +\infty}f_\infty(t)$. In what follows we also set 
\[
v^{(h_n)}_t=\frac{d_{E^{(h_n)}_k}}{h_n}\,, \qquad \text{where }\, k = \left\lfloor\frac t{h_n}\right\rfloor-1
\]
the approximate velocity of the approximate flow at time $t$. Moreover, $C$ will denote a positive constant, which may change from line to line and might depend on the flat flow itself (but not on $h_n$ nor on the discrete step of the minimizing movements).

We divide the proof in two cases.

\medskip

\textbf{Case 1:} There exists $d\in \N\setminus\{0\}$ such that either $P_d<F_\infty<P_{d+1}$ or $F_\infty=P_d$ and $f_\infty(t) >P_d$ for every $t\in [0,+\infty)$.

In this case, there exists $\bar t >0$ such that, for every $T>\bar t$ there exist $\bar n\in \N\setminus\{0\}$ such that
\begin{gather}
\label{expo0.9}
P_d\leq f_n(t) < P_{d+1}\qquad \text{and}\qquad P_{d+1} -f_n(t) \geq \frac{P_{d+1}-F_\infty}{2}=:\delta_0
\end{gather}
for every $n\geq \bar n$ and   $t \in [ \bar t, T]$.
Set $\mathcal{I}^{(h_n)}=\left\{i\in \big\{\lfloor\frac{\bar t}{h_n}\rfloor, \ldots, \lfloor\frac{T}{h_n}\rfloor\big\}: |E^{(h_n)}_i|\neq m\right\}$.
By \cite[Cor. 3.10]{MSS} there exists a constant $C_T>0$ such that 
\beq \label{eq:MSS0} 
|\mathcal{I}^{(h_n)}|\leq C_T
\eeq
for $n$ sufficiently large.
For every $i \not\in \mathcal{I}^{(h_n)}$ we have by iterating \eqref{eg:energy-compa} and using \eqref{expo0.9} 
\[
\frac{1}{h_n}\sum_{k=i+1}^{\lfloor\frac{T}{h_n}\rfloor} \mathcal{D}(E^{(h_n)}_{k},E^{(h_n)}_{k-1}) \leq P(E_i^{(h_n)}) - P(E^{(h_n)}_{\lfloor\frac{T}{h_n}\rfloor}) \leq P(E_i^{(h_n)})  -P_d.
\]
Then by \eqref{e.coroAlex} and by the Euler-Lagrange equation \eqref{eg:Euler-Lag} 
\begin{equation}
\label{eq:th}
\begin{split}
\frac{1}{h_n}\sum_{k=i+1}^{\lfloor\frac{T}{h_n}\rfloor} \mathcal{D}(E^{(h_n)}_{k},E^{(h_n)}_{k-1}) &\leq P(E_i^{(h_n)}) - P_d \leq C_0 \|\kappa_{E_i^{(h_n)}}- \okappa_{E_i^{(h_n)}}\|_{L^2(\pa E_i^{(h_n)}) }^2 \\
&\leq C_0 \|\kappa_{E_i^{(h_n)}}- \lambda_i^{(h_n)}\|_{L^2(\pa E_i^{(h_n)}) }^2= \frac{C_0}{h_n^2} \int_{\pa E_i^{(h_n)}} d_{E_{i-1}^{(h_n)}}^2\, d\H^1 .
\end{split}
\end{equation}
In \cite{MSS} it is proven (formula after (3.25)) that 
\beq \label{eq:MSS1}
 \int_{\pa E_i^{(h_n)}} d_{E_{i-1}^{(h_n)}}^2\, d\H^1  \leq C\mathcal{D}(E^{(h_n)}_{i},E^{(h_n)}_{i-1}) .
\eeq
Therefore from \eqref{eq:th} we conclude 
\[
\sum_{k=i+1}^{\lfloor\frac{T}{h_n}\rfloor} \mathcal{D}(E^{(h_n)}_{k},E^{(h_n)}_{k-1}) \leq \frac{C_0'}{h_n}\mathcal{D}(E^{(h_n)}_{i},E^{(h_n)}_{i-1}) .
\]

Setting $a^{(h_n)}_k= h_n^{-1}\mathcal{D}(E^{(h_n)}_{k},E^{(h_n)}_{k-1})$ we have that 
for every $i \in \big\{\lfloor\frac{\bar t}{h_n}\rfloor, \ldots, \lfloor\frac{T}{h_n}\rfloor\big\}\setminus\mathcal{I}^{(h_n)}$ it holds 
\[
\sum_{k=i}^{\lfloor\frac{T}{h_n}\rfloor} a_k^{(h_n)}
 \leq \frac{C'_0+h_n}{h_n} a_i^{(h_n)}\leq \frac{2C'_0}{h_n} a_i^{(h_n)}.
 \]
Moreover it holds by \eqref{eg:energy-compa} $\sum_{k=1}^\infty a_k \leq P(E(0)) \leq M$.  By Lemma \ref{an1} we infer that
\[
\sum_{k=i+1}^{\lfloor\frac{T}{h_n}\rfloor}a_k^{(h_n)}\leq M \left(1-\frac{h_n}{2C'_0}\right)^{i-C_T- \frac{\bar t}{h_n}} 
\qquad \text{for all } \,  i= \lfloor\frac{\bar t}{h_n}\rfloor, \ldots, \lfloor\frac{T}{h_n}\rfloor.
\]
In other words for every $t\in [\bar t, T]$ we have 
\beq\label{expo1}
\sum_{k=\lfloor\frac{t}{h_n}\rfloor+1}^{\lfloor\frac{T}{h_n}\rfloor}
h_n^{-1}\mathcal{D}(E^{(h_n)}_{k},E^{(h_n)}_{k-1})
\leq M \left(1-\frac{h_n}{2C'_0}\right)^{\lfloor\frac{t}{h_n}\rfloor-C_T-\frac{\bar t}{h_n}}\leq C e^{-\frac{t}{2C'_0}}
\eeq
for $h_n\leq h_0(T)$.

By \cite[Proposition 3.4]{MSS} it holds 
\[
 |E^{(h_n)}_i\Delta E^{(h_n)}_{i-1}| \leq C\ell P(E^{(h_n)}_i)
+ \frac{C}{\ell} \int_{E^{(h_n)}_i\Delta E^{(h_n)}_{i-1}} |d_{E^{(h_n)}_{i-1}}| \, dx 
\]
for all $\ell \leq \frac1C \sqrt{h_n}$.  Therefore, by the inequality above and by \eqref{expo1} we infer    that for every $\bar t \leq t < s \leq T$ we have
\[
\begin{split}
|E^{(h_n)}(t)\Delta E^{(h_n)}(s)|  & = \sum_{i=\lfloor\frac{t}{h_n}\rfloor +1 }^{\lfloor\frac{s}{h_n}\rfloor} |E^{(h_n)}_i\Delta E^{(h_n)}_{i-1}|\\
&\leq C \sum_{i=\lfloor\frac{t}{h_n}\rfloor +1 }^{\lfloor\frac{s}{h_n}\rfloor} \left(\ell P(E^{(h_n)}_i)
+ \frac{1}{\ell} \int_{E^{(h_n)}_i\Delta E^{(h_n)}_{i-1}} |d_{E^{(h_n)}_{i-1}}|\, dx \right)\\
& \leq C P(E(0)) \ell \frac{s-t}{h_n} +\frac{C}{\ell}\sum_{i=\lfloor\frac{t}{h_n}\rfloor +1 }^{\lfloor\frac{s}{h_n}\rfloor}\mathcal{D}(E^{(h_n)}_{i},E^{(h_n)}_{i-1})\\
&\leq C M \ell \frac{s-t}{h_n} M+\frac{C \, h_n}{\ell} e^{-\frac{t}{2C'_0}},
\end{split} 
\]
for all $\ell \leq \frac1C \sqrt{h_n}$ and $h_n\leq h_0$.
In particular, choosing $\ell= \frac{h_n}{e^{\alpha t}}$ with $\alpha = \frac{1}{4C'_0}$ and $s\leq t+1$, we have
\[
|E^{(h_n)}(t)\Delta E^{(h_n)}(s)| \leq CM e^{-\frac{t}{4C'_0}}.
\]
Passing to the limit as $h_n\to 0$, we get
\beq  \label{expo1.5}
|E(t)\Delta E(s)| \leq  CM  e^{-\frac{t}{4C'_0}} \qquad \text{for all} \, \, \bar t \leq t \leq s\leq t+1.
\eeq
Hence,  we deduce that $E(t)$ converges exponentially fast to a set of finite perimeter $E_\infty$ in $L^1$ and $|E_\infty| = m$.

We now show that the  limiting set $E_\infty$ is the union of disjoint open disks with the same radius.
Denote by $S_\infty$ the countable set of discontinuity points of $f_\infty$ and note that for any $t\in (0,+\infty)\setminus S_\infty$ and any sequence $t_n\to t$ we have $f_n(t_n)\to f_\infty(t)$.

Fix $t\geq \bar t$, $0<\alpha<\frac1{2C'_0}$, and an open set $A(t)$ such that $S_\infty\cap[t, t+e^{-\alpha t}]\subset A(t)\subset [t, t+e^{-\alpha t}]$ and $|A(t)|\leq e^{-\alpha' t}$, with $\alpha'>\alpha$. By \eqref{eq:MSS1} and  \eqref{expo1}  we have  
\beq\label{expo2}
\begin{split}
&\int_{[t, t+e^{-\alpha t}]\setminus A(t)} \Big(\int_{\pa E^{(h_n)}(s)} (v^{(h_n)}_s)^2 d\H^1\Big) ds \leq
\frac{1}{h_n} \sum_{i=\lfloor \frac{t}{h_n}\rfloor}^{\lfloor \frac{t+e^{-\alpha t}}{h_n}\rfloor}\int_{\pa E_i^{(h_n)}} d_{E_{i-1}^{(h_n)}}^2 \, d \H^1 \\
&\leq C
\sum_{i=\lfloor \frac{t}{h_n}\rfloor}^{\lfloor \frac{t+e^{-\alpha t}}{h_n}\rfloor}\frac{1}{h_n}\mathcal{D}(E^{(h_n)}_{i},E^{(h_n)}_{i-1}) \leq C   e^{-\frac{t}{2C'_0}}\,,
\end{split}
\eeq
for $n$ sufficiently large. By possibly increasing $\bar t$  we have  $|[t, t+e^{-\alpha t}]\setminus A(t)|>\frac12 e^{-\alpha t}$ for $t\geq \bar t$. Moreover  by \eqref{eq:MSS0}  it holds 
\[
|\{s\in[t, t+e^{-\alpha t}]:\, |E^{(h_n)}(s)|\neq m \}|\to 0\,, \quad \text{as } \, n \to \infty.  
\]
Then  by \eqref{expo2} and by  the mean value theorem there exists $s_n \in [t, t+e^{-\alpha t}]\setminus A(t)$ such that 
\beq\label{expo3}
\|\kappa_{E^{(h_n)}(s_n)}-\okappa_{E^{(h_n)}(s_n)} \|_{L^2(\pa E^{(h_n)}(s_n))}^2\leq \int_{\pa E^{(h_n)}(s_n)} (v^{(h_n)}_{s_n})^2 d\H^1 \leq Ce^{-\big(\frac{1}{2C'_0}-\alpha\big)t}\,,
\eeq
 $|E^{(h_n)}(s_n)|= m$, and thus, in particular, $f_n(s_n)=P(E^{(h_n)}(s_n))$.
From Proposition \ref{prop:2DAle} and \eqref{expo3},  we infer that, for $t \geq \tilde t$, where $\tilde t$ is sufficiently large, $E^{(h_n)}(s_n)$ is diffeomorphic to a union of $d$ disjoint disks
and 
\beq\label{expo3.5}
|P(E^{(h_n)}(s_n)) - P_d| \leq Ce^{-\big(\frac{1}{2C'_0}-\alpha\big)t}.
\eeq
In particular, passing to the limit in $h_n\to 0$ (up to a further not relabelled subsequence, if needed), there exists $s_t \in [t, t+ e^{-\alpha t}]\setminus A(t)$ such that $s_n\to s_t$ and thus $E^{(h_n)}(s_n)\to E(s_t)$  in $L^1$ and  $P(E^{(h_n)}(s_n))=f_n(s_n)\to f_\infty(s_t)$.
In fact, by the uniform $C^{1,\frac12}$-bounds provided by \eqref{expo3} and Proposition \ref{prop:2DAle}  we deduce  that 
$P(E^{(h_n)}(s_n))\to P(E(s_t))$ and thus $f_\infty(s_t)=P(E(s_t))$, and that   $E(s_t)$ is the union of $d$ nearly spherical sets parametrized over $d$ disjoint open disks $D_r(x_i(t))$, $i=1,\dots d$ of volume $m/d$,   with $C^{1, \frac12}$-norm
of the parametrizations (exponentially) small. In particular, setting $F(t):=\cup_{i=1}^d D_r(x_i(t))$, we have that $\sup_{x\in E(s_t)\Delta F(t)}\dist(x, \pa F(t))$ decays exponentially to zero  as $t\to+\infty$, $E_\infty$ is a union of $d$ disjoint open disks of volume $m/d$,  and $F(t)\to E_\infty$ in the Hausdorff sense exponentially fast.

Summarizing,  and recalling also the first inequality in \eqref{expo0.9} and \eqref{expo3.5}, we have shown  that for every $t$ sufficiently large, there exists  $s_t\in [t,t+e^{-\alpha t}] $ such that $E(s_t)$ is the union of $d$ disjoint nearly spherical sets parametrized over the disjoint open disks of $E_\infty$ 
and
\beq\label{expo4}
 P_d\leq  f_\infty (s_t)=P(E(s_t))\leq P_d+Ce^{-(1/C-\alpha)t}\,, \qquad \sup_{x\in  E(s_t)\Delta E_\infty}\dist(x, \pa E_\infty)\leq C e^{-\frac{t}{C}}\,,
\eeq
for a suitable constant $C>1$. 


From the first inequality in \eqref{expo4} and by the monotonicity of $f_\infty$  we obtain  for all $s$ sufficiently large that by choosing $t$ such that  $s =t+e^{-\alpha t}$ it holds 
\[
P(E(s))\leq f_\infty(s) \leq f_\infty(s_t)  \leq P_d+Ce^{-(1/C-\alpha)(s-e^{-\alpha t})}\leq
P_d+Ce^{-\frac{(1/C-\alpha)s}2}\,.
\]
On the other hand, by  Lemma \ref{lem:eke} and \eqref{expo1.5} we obtain
\[
P_d \leq P(E(t)) + C|E(t) \Delta E_\infty|^{\frac13} \leq P(E(t)) + C' e^{-\frac{t}{12C_0'}}\,.
\]
Hence, we have the exponential convergence of the perimeters in \eqref{expo0}. 

The first part of the inequality in \eqref{expo0} follows from the second inequality in \eqref{expo4} and  from \cite[Lemma 4.3]{JN}.

\medskip

\textbf{Case 2}: There exist $d\in \N\setminus\{0\}$ and $\bar t> 0$ such that $F_\infty=P_d=f_\infty(t)$ for every $t\geq \bar t$.

In this case, using the monotonicity of the functions $f_n$'s, we deduce that for every $T>\bar t$ the functions $f_n$ converge uniformly to $f_\infty\equiv F_\infty$
in $[\bar t, T]$.
In particular, using that
\[
\frac{1}{h_n} \mathcal{D}(E^{(h_n)}_k, E^{(h_n)}_{k-1}) \leq f_n((k-1)h_n) - f_n(k h_n),
\]
we deduce that for every $t\in [\bar t + h_n, T]$ we have
\[
\sum_{k=\lfloor\frac{t}{h_n}\rfloor+1}^{\lfloor\frac{T}{h_n}\rfloor}
h_n^{-1}\mathcal{D}(E^{(h_n)}_{k},E^{(h_n)}_{k-1})
\leq f_n\Big(\lfloor\frac{t}{h_n}\rfloor h_n\Big) - f_n\Big(\lfloor\frac{T}{h_n}\rfloor h_n\Big)=:b_n
\to F_\infty - F_\infty=0 \qquad \textup{as } h_n\to 0.
\]
Arguing as above, for every $\bar t +h_n \leq t < s \leq T$, we get
\begin{align*}
|E^{(h_n)}(t)\Delta E^{(h_n)}(s)|  \leq C \ell \frac{s-t}{h_n} P(E(0))+\frac{C}{\ell}\sum_{i=\lfloor\frac{t}{h_n}\rfloor +1 }^{\lfloor\frac{s}{h_n}\rfloor}\mathcal{D}(E^{(h_n)}_{i},E^{(h_n)}_{i-1}),
\end{align*} 
for all $\ell \leq \frac1C \sqrt{h_n}$ and, choosing $\ell= \sqrt{b_n} h_n$, we conclude that
\begin{align*}
|E^{(h_n)}(t)\Delta E^{(h_n)}(s)|  \leq C \sqrt{b_n} (s-t) P(E(0))+C \sqrt{b_n} \to 0,
\end{align*}
that is $E(t) = E(s)$ for every $\bar t<t<s<T$.

The final part of the proof consists in showing that the limiting set $E_\infty$ is the union of disjoint open disks with the same radius. We have 
\[
\int_t^T \int_{\pa E^{(h_n)}(t)} (v_t^{(h_n)})^2 d\H^1 =
\frac{1}{h_n} \sum_{i=\lfloor \frac{t}{h_n}\rfloor}^{\lfloor \frac{T}{h_n}\rfloor}\int_{\pa E_i^{(h_n)}} d_{E_{i-1}^{(h_n)}}^2 =
\sum_{i=\lfloor \frac{t}{h_n}\rfloor}^{\lfloor \frac{T}{h_n}\rfloor}\frac{1}{h_n}\mathcal{D}(E^{(h_n)}_{i},E^{(h_n)}_{i-1}) = o(1).
\]
By the mean value theorem, for ever $T$ sufficiently large there exists $t_n \in [T,T+1]$ such that 
\[
\|\kappa_{E^{(h_n)}(t_n)}-\okappa_{E^{(h_n)}(t_n)} \|_{L^2(\pa E^{(h_n)}(t_n))}^2\leq \int_{\pa E^{(h_n)}(t_n)} (v_{t_n}^{(h_n)})^2 d\H^1 =o(1).
\]
As before, by Proposition \ref{prop:2DAle} the sets $E^{(h_n)}(t_n)$ are nearly spherical and converges to union of $d$ disjoint open balls. From here the conclusion follows.
\end{proof}

\section{The asymptotics of the 2D Mullins-Sekerka flow}\label{sec:4}

Let us first construct a flat flow solution for the Mullins-Sekerka flow in the 2-dimensional flat torus. The construction in the case of bounded domain is due to Luckhaus and St\"urzenhecker   \cite{LS} and the same construction can be applied to the periodic setting with obvious changes. 
We denote the perimeter of a set $E $ in the flat torus $\T^2$ by $P_{\T^2}(E) $ and recall that it is defined as
\[
P_{\T^2}(E) := \sup \Big{\{} \int_E \div X \, dx : X \in C^1(\T^2,\R^2) , \, \| X\|_{L^\infty} \leq 1 \Big{\}}.
\]
Here $X \in C^1(\T^2,\R^2)$ means that the $\Z^2$-periodic extension of $X$ to $\R^2$ is continuously differentiable. For a given set of finite perimeter $E \subset \T^2$, with $|E|=m$, we consider the minimization problem 
\beq \label{def:min-prob2}
\min \Big{\{} P_{\T^2}(F) + \frac{h}{2} \int_{\T^2} |\nabla U_{F,E}|^2 \, dx  :\quad \text{with }\, |F| = |E| = m \Big{\}},  
\eeq
where the function  $ U_{F,E}\in H^1(\T^2)$ is the solution of 
\beq \label{eq:potential1}
-\Delta U_{F,E} = \frac{1}{h} \left( \chi_F - \chi_E\right)
\eeq
with zero average.  As proven in \cite{LS, Rog} there exists a minimizer for \eqref{def:min-prob2}, but it might not be unique.  Concerning the regularity of the minimizers we may  argue as in \cite[Theorem 2.8.]{AFM} (see also \cite[Proposition 2.2]{MoPoSpa}) to deduce that the minimizing set  $F$ is $C^{3,\alpha}$-regular. Let us briefly sketch the argument. First,  we may replace the volume constraint in \eqref{def:min-prob2} by volume penalization as in \cite{AFM, EF} and conclude that the minimizer  is a $\Lambda$-minimizer of the perimeter. This implies that the minimizer  is $C^{1,\alpha}$-regular and satisfies the associated Euler-Lagrange equation
\[
U_{F,E}  = -\kappa_{F} + \lambda \qquad \text{on }\, \pa F
\]
in a weak sense, where $\lambda$ is the Lagrange multiplier. Since $U_{F,E}$ is the solution of \eqref{eq:potential1},  by standard elliptic regularity it holds $U_{F,E} \in C^{1,\alpha}(\T^2)$. Then by  the Euler-Lagrange equation we deduce that $F$ is in fact $C^{3,\alpha}$-regular and the Euler-Lagrange equation holds in the classical sense.

Let us denote 
 \beq
\label{def:distance2}
\mathfrak{D}(F,E) := \int_{\T^2} |\nabla U_{F,E}|^2 \, dx 
\eeq
where $U_{F,E}$ is defined in \eqref{eq:potential1}. We define the $H^{-1}$-norm by duality  as 
\[
\|f\|_{H^{-1}(\T^2)} :=  \sup \Big\{ \int_{\T^2} \varphi \, f \, dx : \|\nabla \varphi\|_{L^2(\T^2)} \leq 1\Big\} .
\]
Then by integrating  \eqref{eq:potential1}  by parts yields  
\beq
\label{def:Hmenouno-bound}
\|\chi_F - \chi_E\|_{H^{-1}(\T^2)}^2  \leq  h^2 \, \| \nabla U_{F,E} \|_{L^2(\T^2)}^2 = h^2 \, \mathfrak{D}(F,E).
\eeq

We fix the time step $h >0$ and our initial set $E(0) \subset \T^2$ and let $E_1^{(h)}$ be a minimizer of \eqref{def:min-prob2} with $E(0) =E$.   We construct the discrete-in-time evolution $(E_k^{(h)})_{k\in \N}$ as before by induction such that, assuming that $E_k^{(h)}$  is defined, we set $E_{k+1}^{(h)}$ to be a minimizer of \eqref{def:min-prob2} with $E= E_k^{(h)}$ and denote the associated potential for short  by $U_{k+1}^{(h)}$, which is the solution of  
\beq \label{eq:potential2}
-\Delta U_{k+1}^{(h)} = \frac{1}{h} \left( \chi_{E_{k+1}^{(h)}} - \chi_{E_{k}^{(h)}}\right)
\eeq 
with zero average. The Euler-Lagrange equation now reads as 
\beq \label{eq:Euler-Lagr2}
U_{k+1}^{(h)} = -\kappa_{E_{k+1}^{(h)}} + \lambda_{k+1}^{(h)} \qquad \text{on }\, \pa E_{k+1}^{(h)}.
\eeq
By a direct energy comparison (formula (3.6) in \cite{Rog}) we obtain 
\beq \label{eq:energy-compa2}
P_{\T^2}(E_{k+1}^{(h)})  +\frac{h}{2}  \mathfrak{D}(E_{k+1}^{(h)}, E_{k}^{(h)}) \leq P_{\T^2}(E_{k}^{(h)})  ,
\eeq
where $ \mathfrak{D}(E_{k+1}^{(h)}, E_{k}^{(h)})$ is defined in \eqref{def:distance2}.

As before we define the approximative flat flow  $\{E^{(h)}(t)\}_{t \geq 0}$ by setting
\[
E^{(h)}(t) = E_k^{(h)} \qquad \text{for }\, t \in [kh, (k+1) h)
\]
and  we call a \emph{flat flow solution}  of \eqref{eq:Mull-Sek}   any cluster point  $\{E(t)\}_{t \geq 0}$ of $\{E^{(h)}(t)\}_{t \geq 0}$, as $h\to 0$;  i.e., 
\[
E^{(h_n)}(t) \to E(t) \quad \text{ in $L^1$ for almost every }\, t >0 \text{ and for some }h_n\to 0.
\]
Arguing exactly as in  \cite[Proposition 3.1]{Rog} we may conclude that there exists a flat flow starting from $E(0)$ such that $P_{\T^2}(E(t)) \leq P_{\T^2}(E(0))$, $|E(t)| = |E(0)|$ for every $t \geq 0$ and $\{E(t)\}_{t\geq 0}$ satisfies the equation \eqref{eq:Mull-Sek} in a weak sense.

To proceed, we need the analogue  of Proposition~\ref{prop:2DAle}  for the Mullins-Sekerka flow. To this aim we first prove the following lemma, which is similar to \cite[Lemma 2.1]{Sch}. 
\begin{lemma}
\label{lem:schatzle}
 Let $E\subset\T^2$ be a set of class $C^3$, with $|E| \leq \frac12$ and  $P_{\T^2}(E) <2 $,  and let $u_E \in C^1(\T^2)$  be a function with zero average such that  $\|\nabla u_E\|_{L^2(\T^2)} \leq M$ and 
\beq \label{eq:boundary}
\kappa_E = -u_E +\lambda \quad \text{on } \,  \pa E \quad \text{ for some } \lambda\in\R .
\eeq
Then it holds 
\beq \label{eq:density-est}
\sup_{x \in \T^2, \rho >0} \frac{\H^1(\pa E \cap D_\rho(x))}{\rho} \leq K,
\eeq
where the constant $K>0$ depends only on $|E|$ and $M$.
\end{lemma}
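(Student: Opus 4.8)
The plan is to treat $\pa E$ as a one–dimensional integral varifold with weight measure $\mu := \H^1\res\pa E$ and generalized curvature vector $\vec H = -\kappa_E\nu_E$, and to extract \eqref{eq:density-est} from the monotonicity formula. Since $P_{\T^2}(E)<2$ the set $E$ cannot wind around the torus, so after translating we may lift the relevant disk $D_\rho(\xi)$ to $\R^2$ and work with the Euclidean position field $x-\xi$. I record the integrated monotonicity inequality
\[
\frac{\mu(D_\sigma(\xi))}{\sigma}\le \frac{\mu(D_\rho(\xi))}{\rho}+\int_\sigma^\rho\frac{1}{\tau^2}\Big|\int_{D_\tau(\xi)}(x-\xi)\cdot\vec H\,d\mu\Big|\,d\tau,\qquad 0<\sigma\le\rho .
\]
Only small radii are relevant: for $\sigma\ge\rho_0$, with $\rho_0$ a fixed scale (say half the size of the torus), one has $\mu(D_\sigma(\xi))/\sigma\le P_{\T^2}(E)/\rho_0<2/\rho_0$ for free, so it suffices to bound the density ratio for $\sigma\le\rho_0$.

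First I would bound the multiplier $\lambda$. Testing the first variation of $\pa E$ with the global field $x-\xi$ and using $\div_{\pa E}(x-\xi)=1$ gives $P_{\T^2}(E)=\int_{\pa E}(u_E-\lambda)\,\nu_E\cdot(x-\xi)\,d\H^1$ up to sign; applying the divergence theorem to the two resulting boundary integrals converts them into $\int_{\pa E}\nu_E\cdot(x-\xi)\,d\H^1=2|E|$ and $\int_{E}\big(\nabla u_E\cdot(x-\xi)+2u_E\big)\,dx$. Because these are \emph{volume} integrals, they are controlled by $\|\nabla u_E\|_{L^2(\T^2)}\le M$ and, via Poincaré's inequality together with the zero–average condition, by $\|u_E\|_{L^2(\T^2)}\le CM$; solving for $\lambda$ yields $|\lambda|\le C(1+M)/|E|$, depending only on $M$ and $|E|$.

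The heart of the matter is the curvature integral at small scales. Substituting $\kappa_E=-u_E+\lambda$ from \eqref{eq:boundary} gives
\[
\int_{D_\tau(\xi)}(x-\xi)\cdot\vec H\,d\mu=\int_{\pa E\cap D_\tau(\xi)}(x-\xi)\cdot\nu_E\,u_E\,d\H^1-\lambda\int_{\pa E\cap D_\tau(\xi)}(x-\xi)\cdot\nu_E\,d\H^1 .
\]
A naive estimate of the first term would require the trace of $u_E$ on $\pa E$, over which we have no a priori control; this is precisely the obstruction that makes the statement nontrivial. Following the idea of \cite[Lemma 2.1]{Sch}, I would instead apply the divergence theorem on $E\cap D_\tau(\xi)$, turning both terms into the volume integral $\int_{E\cap D_\tau(\xi)}\big(\nabla u_E\cdot(x-\xi)+2u_E\big)\,dx$ and $2\lambda|E\cap D_\tau(\xi)|$, plus slice contributions on $E\cap\pa D_\tau(\xi)$. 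Using $|E\cap D_\tau(\xi)|\le\pi\tau^2$ and Cauchy–Schwarz, the gradient part is $O(M\tau^2)$ and the $\lambda$ part is $O(|\lambda|\tau^2)$, both entirely in terms of $M,|\lambda|,|E|$, while the zeroth–order part $2\int_{E\cap D_\tau}u_E$ is only $O(M\tau)$.

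The main obstacle is that this zeroth–order contribution scales borderline against the weight $\tau^{-2}$ in the monotonicity integral, so a crude insertion produces only a logarithmically divergent estimate. To close the argument one must keep the slice terms on $\pa D_\tau(\xi)$ together with the derivative of the density, rerun the derivation of the monotonicity formula with the curvature term already in the above volume form, and absorb the marginal contribution by a Gronwall/iteration step in $\tau$. This bootstrap — which is the technical core of the lemma, mirroring \cite[Lemma 2.1]{Sch} — yields $\mu(D_\sigma(\xi))/\sigma\le K$ uniformly for $\sigma\le\rho_0$ with $K=K(|E|,M)$, and combined with the trivial large–scale bound gives \eqref{eq:density-est}.
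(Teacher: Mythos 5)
Your proposal follows essentially the same route as the paper: bound the Lagrange multiplier by testing the first variation with the position vector field, and then invoke the density estimate for curves whose curvature is prescribed by an ambient Sobolev function, i.e.\ \cite[Lemma~2.1]{Sch}. The paper in fact treats that density estimate entirely as a black box, so your sketch of its proof (the monotonicity formula, the borderline $O(M\tau)$ zeroth--order term against the $\tau^{-2}$ weight, and the iteration needed to beat the logarithm) is extra detail rather than a divergence of method; since you explicitly defer the closing of that bootstrap to Sch\"atzle, you and the paper are on the same footing there.

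The one step you gloss over is where the hypothesis $P_{\T^2}(E)<2$ actually does work. There is no global $C^1$ field ``$x-\xi$'' on $\T^2$, so to run the multiplier bound you must localize: the paper proves, via a projection argument ($\H^1(\pi_j(\Gamma_i))<1$ for $j=1,2$ because each boundary component bounds and has length less than $2$), that every connected component of $E$ is contained in a cube $Q_{1-\delta}(x_i)$, and then uses a cut-off position field supported near each component, summing $P_{\T^2}(E_j)=-\int_{E_j}\div(u_E\,x)\,dx+2\lambda|E_j|$ over components to get $|\lambda|\,|E|\le P_{\T^2}(E)+C\|\nabla u_E\|_{L^2}$. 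Your remark that ``$E$ cannot wind around the torus'' is precisely this fact, but it is asserted rather than proved, and one also has to rule out the possibility that a single boundary component is non-separating (which the paper does by noting two such components would already force $P_{\T^2}(E)\ge 2$). This is a fixable but genuine omission; everything else in your argument is consistent with the paper's proof.
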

\begin{proof}
We note that by \eqref{eq:boundary}   for every $X \in C^1(\T^2; \R^2)$  it holds
\beq \label{eq:1stVar}
\int_{\pa E} \div_\tau X \, d \H^1 = \int_{E} \div \big((-u_E + \lambda)X\big) \, dx. 
\eeq
Therefore the statement follows from \cite[Lemma 2.1]{Sch} once we bound the Lagrange multiplier $\lambda \in \R$.  To this aim, and for future purpose, we show that there is $\delta>0$ such that  every component $E_i$ of $E$ is contained in a cube $Q_{1-\delta}(x_i) := (1-\delta)^2 +\{x_i\}$  for some $x_i$. 

Let us first show that every component $\Gamma_i$  of the boundary $\pa E$  divides the torus $\T^2$ in two components and thus it is the boundary of a set.   Indeed, if this is not the case then necessarily  $\H^1(\Gamma_i) \geq 1$. Since $\Gamma_i$ is not a boundary of a set then $\pa E$ must have another component, say  $\Gamma_j$, such that $\H^1(\Gamma_j)\geq 1$. But this implies $P_{\T^2}(E) = \H^1(\pa E) \geq 2$, which contradicts  the assumption $P_{\T^2}(E)< 2$.   

Let us next show that $\Gamma_i$ is contained in a cube $Q_{1-\delta}(x_i)$  for some $x_i$. Let $\pi_1: \T^2 \to \T$ be the projection onto the $x_1$-axis i.e., $\pi_1(x_1, x_2) = x_1$. Then we deduce from  $\H^1(\Gamma_i) <2$ and from the fact that $\Gamma_i$ is the boundary of a set that $\H^1(\pi_1(\Gamma_i))<1$. Similarly it holds  $\H^1(\pi_2(\Gamma_i))<1$, where $\pi_2$ is the projection onto the $x_2$-axis. This implies that $\Gamma_i \subset Q_{1-\delta}(x_i)$ for some $\delta>0$ and $x_i$. Let us from now on  denote  the set enclosed by $\Gamma_i$ which is inside the cube $Q_{1-\delta}(x_i)$ by $F_i$.

Let  $\Gamma_1, \dots, \Gamma_n$ be the components of the boundary $\pa E$ which enclose the sets  $F_1, \dots, F_n$.  Let us show that 
\beq \label{eq:ann1}
E \subset \bigcup_{i=1}^n F_i.
\eeq 
Since $F_i \subset Q_{1-\delta}(x_i)$  we have by the Isoperimetric Inequality $2\sqrt{\pi |F_i|} \leq \H^1(\Gamma_i)$. Therefore by the assumption on the perimeter,  $P_{\T^2}(E)<2$, we have 
\[
4 \pi \big| \bigcup_{i=1}^n F_i \big| \leq 4 \pi \sum_{i=1}^n |F_i| \leq \sum_{i=1}^n \H^1(\Gamma_i)^2 \leq \Big(  \sum_{i=1}^n \H^1(\Gamma_i) \Big)^2  \leq P_{\T^2}(E)^2 < 4.
\] 
Therefore $\big| \bigcup_{i=1}^n F_i \big| < \frac{1}{\pi}< \frac13$. Since, $|E| \leq \frac12$ then necessarily $E \subset  \bigcup_{i=1}^n F_i$.

We conclude from \eqref{eq:ann1} that a component $E_j$ of $E$ is contained   in $F_i$ for some $i$. Therefore since $F_i \subset Q_{1-\delta}(x_i)$, then also $E_j \subset Q_{1-\delta}(x_i)$.

We may finally bound the Lagrange multiplier in \eqref{eq:1stVar}  by a standard argument. Indeed, let $E_j$ be a component of $E$. Since $E_j \subset Q_{1-\delta}(x_j)$ we may define $X \in C_0^1\big(Q_{1-\delta/3}(x_j)\big)$ such that   $X(x) = x$ in $E_j$ and $X(x) = 0$ in $E \setminus E_j$. We apply  \eqref{eq:1stVar}  with this choice of $X$ and have  
\[
\begin{split}
P_{\T^2}(E_j) =  \int_{\pa E_j} \div_\tau x \, d \H^1  &= \int_{E_j} \div \big((-u_E + \lambda)x\big) \, dx\\
&= -\int_{E_j} \div ( u_E \, x) \, dx +2  \lambda |E_j|. 
\end{split}
\]
We have $\big| \int_{E_j} \div ( u_E \, x) \, dx \big| \leq C\|u_E\|_{H^1(E_j)}$. By repeating the argument for every component we obtain by the Poincar\'e inequality 
\[
|\lambda| |E| \leq  P_{\T^2}(E) + C\|u_E\|_{H^1(E)}\leq   P_{\T^2}(E) + C\|u_E\|_{H^1(\T^2)} \leq P_{\T^2}(E) + C\| \nabla u_E\|_{L^2(\T^2)}. 
\]
This yields the required bound on the Lagrange multiplier. 
\end{proof}

We also recall the result  by Meyers-Ziemer \cite[Theorem 4.7]{MZ} which implies that if $E$ satisfies \eqref{eq:density-est} then for every $\varphi \in C^1(\T^2)$ it holds 
\beq \label{eq:MZ}
\big| \int_{\pa E} \varphi \, d \H^1 \big| \leq C \|\varphi\|_{W^{1,1}(\T^2)}\,,
\eeq
with $C$ depending on $K$ (and thus on $|E|$ and $M$).

We are now ready to state and prove the analogue of Proposition~\ref{prop:2DAle}, which is suited for the Mullins-Sekerka flow. 

\begin{proposition} \label{prop:2DAle2}
 Let $E\subset\T^2$ be a set of class $C^3$, with $|E|=m \leq \frac12$ and  $P_{\T^2}(E)< 2$, and let $u_E \in C^1(\T^2)$ be a function with zero average such that 
\[
\kappa_E = -u_E +\lambda \quad \text{on } \,  \pa E
\]
 for some $\lambda \in \R$. Then, there exist $\e_0=\e_0(m)\in (0,1)$ and $C_0=C_0(m)>1$ such that  if 
\[
\|\nabla u_E\|_{L^2(\T^2)} \leq \e_0
\]
then $E$ is diffeomorphic to a union of $d$ disjoint disks $D_1$, \dots, $D_{d}$ with equal areas $m/{d}$ and $\dist(D_i, D_j)>0$ for $i\neq j$. Moreover,   
\[
|P_{\T^2}(E)- P_{d}|\leq C_0\|\nabla u_E\|_{L^2(\T^2)}^2 
\]
and for $\e_0$ sufficiently small the boundary of every connected component
of the set $E$ can be parametrized as a normal graph over a one of the disc $D_i$ 
with $C^{1,\frac12}$ norm of the parametrization vanishing as $\e_0\to 0$.
\end{proposition}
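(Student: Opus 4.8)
The plan is to reduce Proposition~\ref{prop:2DAle2} to the Euclidean statement of Proposition~\ref{prop:2DAle}, the only genuine work being to convert the $H^1$-smallness of $u_E$ on $\T^2$ into $L^2$-smallness of $\kappa_E-\okappa_E$ on $\pa E$. The bridge is the Euler--Lagrange relation $\kappa_E=-u_E+\lambda$: averaging it over $\pa E$ gives $\okappa_E=-\frac{1}{\H^1(\pa E)}\int_{\pa E}u_E\,d\H^1+\lambda$, so that on $\pa E$
\[
\kappa_E-\okappa_E=-\Big(u_E-\frac{1}{\H^1(\pa E)}\int_{\pa E}u_E\,d\H^1\Big).
\]
Since the boundary average is the best constant $L^2(\pa E)$-approximation of $u_E$, this already shows $\|\kappa_E-\okappa_E\|_{L^2(\pa E)}\le\|u_E\|_{L^2(\pa E)}$, and the whole matter is reduced to a trace estimate bounding $\|u_E\|_{L^2(\pa E)}$ by $\|\nabla u_E\|_{L^2(\T^2)}$.

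First I would extract the geometric information furnished by Lemma~\ref{lem:schatzle}. Under the standing assumption $\|\nabla u_E\|_{L^2(\T^2)}\le\e_0\le1$, that lemma applies with $M=1$ and provides both the density estimate \eqref{eq:density-est} with a constant $K=K(m)$ and the fact that every component of $E$ is contained in a cube $Q_{1-\delta}(x_i)$ of side strictly less than one. The density estimate is exactly the hypothesis needed to invoke the Meyers--Ziemer inequality \eqref{eq:MZ}, again with a constant depending only on $m$.

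Next I would carry out the trace estimate by testing \eqref{eq:MZ} with $\f=u_E^2\in C^1(\T^2)$, obtaining
\[
\int_{\pa E}u_E^2\,d\H^1\le C\|u_E^2\|_{W^{1,1}(\T^2)}=C\Big(\|u_E\|_{L^2(\T^2)}^2+2\int_{\T^2}|u_E|\,|\nabla u_E|\,dx\Big).
\]
Because $u_E$ has zero average, Poincar\'e gives $\|u_E\|_{L^2(\T^2)}\le C\|\nabla u_E\|_{L^2(\T^2)}$, and Cauchy--Schwarz handles the mixed term; together these yield $\|u_E\|_{L^2(\pa E)}^2\le C(m)\|\nabla u_E\|_{L^2(\T^2)}^2$, hence $\|\kappa_E-\okappa_E\|_{L^2(\pa E)}\le C(m)\|\nabla u_E\|_{L^2(\T^2)}$ by the first paragraph.

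Finally I would transfer the problem to the plane and apply Proposition~\ref{prop:2DAle}. Each boundary component, lying in a cube of side less than one, lifts to a simple closed curve in $\R^2$; translating the enclosed components apart if necessary (which preserves area, perimeter, curvature and $\okappa_E$), I may regard $E$ as a bounded $C^2$ set in $\R^2$ with $P(E)=P_{\T^2}(E)<2=:M$. Choosing $\e_0=\e_0(m)$ so small that $C(m)\e_0$ stays below the threshold of Proposition~\ref{prop:2DAle}, the estimate of the previous paragraph puts $E$ in its hypotheses; Proposition~\ref{prop:2DAle} then delivers the diffeomorphism to $d$ equally sized disjoint disks, the normal-graph parametrization with $C^{1,\frac12}$-norm vanishing as $\e_0\to0$, and $|P_{\T^2}(E)-P_d|=|P(E)-P_d|\le C_0\|\kappa_E-\okappa_E\|_{L^2(\pa E)}^2\le C_0(m)\|\nabla u_E\|_{L^2(\T^2)}^2$. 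The main obstacle is the trace step: controlling the boundary $L^2$-norm of $u_E$ by the bulk Dirichlet energy with a constant free of the (a priori uncontrolled) local geometry of $\pa E$. This is precisely what the density estimate of Lemma~\ref{lem:schatzle} together with Meyers--Ziemer secure, and the hypotheses $|E|\le\frac12$ and $P_{\T^2}(E)<2$ are what make that density estimate available while excluding lamellar components.
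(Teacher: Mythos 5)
Your proof is correct and follows essentially the same route as the paper: Lemma~\ref{lem:schatzle} to secure the density bound, the Meyers--Ziemer inequality \eqref{eq:MZ} applied to $u_E^2$ together with the Poincar\'e inequality to control $\|u_E\|_{L^2(\pa E)}$ by $\|\nabla u_E\|_{L^2(\T^2)}$, the Euler--Lagrange relation to convert this into $L^2$-smallness of $\kappa_E-\okappa_E$, and finally Proposition~\ref{prop:2DAle}. Your added care about lifting the components from $\T^2$ to $\R^2$ is a point the paper leaves implicit but does not change the argument.
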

We note that we need the assumption $P_{\T^2}(E)< 2$ to exclude the case when  $E$ is a strip or a union of strips.  

\begin{proof}
We recall that the argument in the proof of Lemma \ref{lem:schatzle} implies that every component $E_i$ of $E$ is contained in a cube $Q_{1-\delta}(x_i)$ for some $x_i$.  By Lemma \ref{lem:schatzle} we can apply  \eqref{eq:MZ} with $\varphi = u_E^2 $ and obtain
\[
 \int_{\pa E} u_E^2\, d \H^1 \leq C \|u_E^2\|_{W^{1,1}(\T^2)} \leq C \|u_E\|_{H^1(\T^2)}^2 \leq C\| \nabla u_E\|_{L^2(\T^2)}^2, 
\]
where the last inequality follows from Poincar\'e inequality. Since $u_E$ satisfies \eqref{eq:boundary} we deduce by the assumption $\|\nabla u_E\|_{L^2(\T^2)} \leq \e_0$ that 
\[
 \int_{\pa E} |\kappa_E - \okappa_E|^2 \, d \H^1\leq   \int_{\pa E} |\kappa_E - \lambda|^2 \, d \H^1 =  \int_{\pa E} u_E^2\, d \H^1\leq C\| \nabla u_E\|_{L^2(\T^2)}^2 \leq C \e_0^2. 
\]
Hence, the claim follows from Proposition \ref{prop:2DAle}. 
\end{proof}

Proposition \ref{prop:2DAle2} immediately implies the following corollary.

\begin{corollary}\label{coro:2Dale2}
 Let $E\subset\T^2$ be a set of class $C^3$, with $|E|=m \leq \frac12$ and  $P_{\T^2}(E)< 2$ and let $u_E \in C^1(\T^2)$ be a function with zero average such that $\kappa_E = -u_E +\lambda$ on $\pa E$ for some $\lambda \in \R$.  If $\delta_0>0$ and $d \in  \N$, are such that $P_{d} \leq P(E) \leq P_{d+1} - \delta_0$, then it holds 
\[
P(E) - P_d  \leq C_0 \|\nabla u_E\|_{L^2(\T^2)}^2 
\]
for $C_0= C_0(m,\delta_0)$.
\end{corollary}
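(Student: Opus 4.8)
The plan is to derive Corollary \ref{coro:2Dale2} from Proposition \ref{prop:2DAle2} exactly as Theorem \ref{cor:2DAle} was derived from Proposition \ref{prop:2DAle}, namely via a dichotomy on the size of $\|\nabla u_E\|_{L^2(\T^2)}$; the only genuinely new point is to reconcile the number of disks furnished by the proposition with the index $d$ fixed by the pinching hypothesis $P_d \leq P(E) \leq P_{d+1} - \delta_0$. Throughout I would use that, since every component of $E$ sits inside a cube $Q_{1-\delta}(x_i)$ (as established in the proof of Lemma \ref{lem:schatzle}), the torus perimeter $P_{\T^2}(E)$ agrees with the ordinary perimeter $P(E)$, so the two notations may be used interchangeably.

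First I would fix the constants $\e_0 = \e_0(m)$ and $C_0 = C_0(m)$ of Proposition \ref{prop:2DAle2} and split into two regimes. If $\|\nabla u_E\|_{L^2(\T^2)} \geq \e_0$, the bound is trivial: since $P_d \geq 0$ and $P(E) < 2$, one has $P(E) - P_d \leq P(E) < 2 \leq (2/\e_0^2)\,\|\nabla u_E\|_{L^2(\T^2)}^2$. If instead $\|\nabla u_E\|_{L^2(\T^2)} \leq \e_0$, then Proposition \ref{prop:2DAle2} applies and produces an integer $\tilde d$ such that $E$ is diffeomorphic to $\tilde d$ disjoint disks of area $m/\tilde d$ and $|P(E) - P_{\tilde d}| \leq C_0\|\nabla u_E\|_{L^2(\T^2)}^2 \leq C_0\e_0^2$. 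Once I know $\tilde d = d$, the estimate $P(E) - P_d = P(E) - P_{\tilde d} \leq C_0\|\nabla u_E\|_{L^2(\T^2)}^2$ is immediate.

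The hard part is therefore the matching $\tilde d = d$, and here both the gap $\delta_0$ and the a priori bound on the number of components come into play. From $P_d \leq P(E) \leq P_{d+1}-\delta_0$ together with $|P(E)-P_{\tilde d}|\leq C_0\e_0^2$ I would localize $P_{\tilde d}$ in the interval $[\,P_d - C_0\e_0^2,\ P_{d+1} - \delta_0 + C_0\e_0^2\,]$. Since $P_d \leq P(E) < 2$ already forces $d < 1/(\pi m)$, the admissible values $P_k = 2\sqrt{\pi m k}$ relevant here are finitely many and hence separated by a gap $P_k - P_{k-1} \geq c_1(m) > 0$. Shrinking $\e_0$ so that $C_0\e_0^2 < \min\{c_1(m),\,\delta_0\}$ — this is precisely where the dependence $C_0 = C_0(m,\delta_0)$ of the final constant enters — makes that interval too short to contain any $P_k$ other than $P_d$: the upper endpoint stays strictly below $P_{d+1}$ (forcing $\tilde d \leq d$) while the lower endpoint stays strictly above $P_{d-1}$ (forcing $\tilde d \geq d$). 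I expect no further obstacle, since the remaining trivial-regime estimate is identical to the one already used for Theorem \ref{cor:2DAle}.
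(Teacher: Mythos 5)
Your proof is correct and follows the same route the paper intends: the corollary is stated there as an immediate consequence of Proposition \ref{prop:2DAle2}, via the same dichotomy on $\|\nabla u_E\|_{L^2(\T^2)}$ versus $\e_0$ used in the proof of Theorem \ref{cor:2DAle} (trivial bound in the large regime, the proposition in the small regime). Your explicit matching of the disk count $\tilde d$ produced by the proposition with the index $d$ fixed by the pinching hypothesis --- using the gap $\delta_0$ on one side and the uniform separation of the finitely many admissible values $P_k$ with $k<1/(\pi m)$ on the other --- correctly fills in the detail the paper leaves implicit.
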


We also need the following lemma which is essentially a restatement of \cite[Lemma 3.1]{LS}. The proof can also be found in  \cite[Lemma 2]{CL}, but we recall it for the reader's convenience. 
\begin{lemma}
\label{lem:luckhaus}
Let $\varphi \in  \text{BV}(\T^2)$. There is a constants $C>1$ and $\rho_0 >0$ such for all $\rho \leq \rho_0$ it holds  
\[
\|\varphi \|_{L^1(\T^2)} \leq C \rho \,  \|\varphi\|_{BV(\T^2)}+C \rho^{-1} \|\varphi\|_{H^{-1}(\T^2)}. 
\]
\end{lemma}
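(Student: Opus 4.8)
The plan is to prove this by a soft interpolation argument, splitting $\varphi$ into a smooth part and a rough remainder via mollification at scale $\rho$. First I would dispose of the trivial case: if $\varphi$ does not have zero average then, taking constant test functions in the definition of the $H^{-1}$-norm, one sees that $\|\varphi\|_{H^{-1}(\T^2)}=+\infty$ and the asserted inequality holds trivially; hence I may assume $\int_{\T^2}\varphi\,dx=0$. Fix an even, nonnegative standard mollifier $\eta\in C_c^\infty(\R^2)$ with $\int\eta=1$, set $\eta_\rho(x)=\rho^{-2}\eta(x/\rho)$, and consider the periodic convolution $\varphi_\rho:=\varphi*\eta_\rho$, which is smooth and still has zero average. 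Writing $\|\varphi\|_{L^1(\T^2)}\le\|\varphi-\varphi_\rho\|_{L^1(\T^2)}+\|\varphi_\rho\|_{L^1(\T^2)}$, it suffices to bound the two terms by $C\rho\|\varphi\|_{BV(\T^2)}$ and $C\rho^{-1}\|\varphi\|_{H^{-1}(\T^2)}$ respectively.

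For the first term I would invoke the standard estimate for the $L^1$-error of mollification of a $BV$ function, namely $\|\varphi-\varphi_\rho\|_{L^1(\T^2)}\le C\rho\,|D\varphi|(\T^2)\le C\rho\,\|\varphi\|_{BV(\T^2)}$, with $C$ depending only on $\eta$; this is where the restriction $\rho\le\rho_0$ enters, to guarantee that the periodic mollification is well defined and that the estimate holds with a uniform constant. For the second term I would first use that $|\T^2|=1$ to pass from $L^1$ to $L^2$, i.e. $\|\varphi_\rho\|_{L^1(\T^2)}\le\|\varphi_\rho\|_{L^2(\T^2)}$, and then compute the $L^2$-norm by duality. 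Since $\eta$ is even, the convolution is self-adjoint, so for every $g$ with $\|g\|_{L^2(\T^2)}\le1$ one has $\int_{\T^2}\varphi_\rho\,g\,dx=\int_{\T^2}\varphi\,(\eta_\rho*g)\,dx$. As $\eta_\rho*g\in H^1(\T^2)$ and $\varphi$ has zero average, the definition of the $H^{-1}$-norm gives $\int_{\T^2}\varphi\,(\eta_\rho*g)\,dx\le\|\varphi\|_{H^{-1}(\T^2)}\,\|\nabla(\eta_\rho*g)\|_{L^2(\T^2)}$, while Young's inequality yields $\|\nabla(\eta_\rho*g)\|_{L^2(\T^2)}=\|(\nabla\eta_\rho)*g\|_{L^2(\T^2)}\le\|\nabla\eta_\rho\|_{L^1(\R^2)}\|g\|_{L^2(\T^2)}\le C\rho^{-1}$. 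Taking the supremum over such $g$ produces $\|\varphi_\rho\|_{L^2(\T^2)}\le C\rho^{-1}\|\varphi\|_{H^{-1}(\T^2)}$.

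Combining the two bounds gives exactly $\|\varphi\|_{L^1(\T^2)}\le C\rho\|\varphi\|_{BV(\T^2)}+C\rho^{-1}\|\varphi\|_{H^{-1}(\T^2)}$, as desired. The argument presents no serious analytic obstacle; the only point requiring a little care is the duality step, where one must exploit that differentiating the mollifier costs exactly one factor of $\rho^{-1}$ (so that $\|\nabla\eta_\rho\|_{L^1(\R^2)}\sim\rho^{-1}$) while pairing against $\varphi$ through its $H^{-1}$-norm, which is precisely what generates the scaling $\rho^{-1}$ in front of $\|\varphi\|_{H^{-1}(\T^2)}$. One should also verify that the periodic mollifier and Young's inequality are applied correctly on $\T^2$, and that the constant part of $\eta_\rho*g$ drops out in the pairing because $\varphi$ has zero average, but these are routine checks.
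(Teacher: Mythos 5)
Your proof is correct and follows essentially the same strategy as the paper: mollify at scale $\rho$, split $\|\varphi\|_{L^1(\T^2)}\le\|\varphi-\varphi*\eta_\rho\|_{L^1(\T^2)}+\|\varphi*\eta_\rho\|_{L^1(\T^2)}$, control the first term by the standard $BV$ mollification estimate, and control the second through the $H^{-1}$-pairing. The one point where you diverge is the treatment of the mollified term: the paper bounds $|\varphi*\eta_\rho(x)|$ pointwise by $\|\varphi\|_{H^{-1}(\T^2)}\|\eta_\rho\|_{H^{1}(\T^2)}$ and asserts $\|\eta_\rho\|_{H^1(\T^2)}\le C/\rho$, whereas you pass to $L^2$ by duality and use Young's inequality with $\|\nabla\eta_\rho\|_{L^1(\R^2)}\le C\rho^{-1}$; your route is actually the more careful way to obtain the $\rho^{-1}$ scaling, since in two dimensions $\|\nabla\eta_\rho\|_{L^2}$ scales like $\rho^{-2}$ and the direct pointwise pairing must be interpreted with some care. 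Your preliminary reduction to the zero-average case is also a reasonable addition (the paper applies the lemma only to differences of characteristic functions of sets of equal measure, so the issue does not arise there), since with the paper's definition the $H^{-1}$-norm of a function with nonzero average is infinite and the inequality is vacuous.
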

\begin{proof}
 Let us fix $\rho >0$ and let $\eta_{\rho}(x) = \rho^{-2}\eta(\tfrac{x}{\rho})$ be the standard mollifier. Then we write 
\[
\|\varphi\|_{L^1(\T^2)} \leq \int_{\T^2} |\varphi - \varphi * \eta_{\rho}| \, dx +  \int_{\T^2} | \varphi * \eta_{\rho}| \, dx .
\]
Let us first bound the second term on the RHS. Since $\|\eta_{\rho}\|_{H^{1}(\T^2)} \leq C/\rho$ we obtain by the definition of the $H^{-1}$-norm
\[
\begin{split}
\int_{\T^2} | \varphi * \eta_{\rho}| \, dx = \int_{\T^2} \big| \int_{\T^2} \varphi(y) \eta_{\rho}(y-x) \, dy \big| dx \leq \|\varphi\|_{H^{-1}(\T^2)} \|\eta_{\rho}\|_{H^{1}(\T^2)} \leq   C \rho^{-1}\, \|\varphi\|_{H^{-1}(\T^2)}. 
\end{split}
\]
We bound the first term by change of variables
\[
\begin{split}
    \int_{\T^2} |\varphi - \varphi * \eta_{\rho}| \, dx &=  \int_{\T^2} \big| \int_{\T^2} \big(\varphi(x) - \varphi(x+\rho y)\big) \eta(y) \, dy \big| \, dx\\ 
    &= \int_{\T^2} \big| \int_{\T^2} \int_{0}^\rho - \frac{\pa}{\pa \tau}\varphi(x+\tau y) \eta(y) \, d \tau dy \big| \, dx\\
    &\leq C \rho \, \|\varphi\|_{BV(\T^2)}\, .
\end{split}
\]
\end{proof}

We are ready to prove the convergence of the Mullisk-Sekerka flow in the flat torus $\T^2$.

\begin{proof}[\textbf{Proof of Theorem \ref{thm3}}]
The proof is similar to the proof of Theorem \ref{thm2} but we highlight the main differences.  Let $\{E(t)\}_{t\geq0}$ be a flat flow for the Mullins-Sekerka and let $\{E^{(h_n)}(t)\}_{t\geq0}$ be an approximate flow converging to $E(t)$. Since $\{\T^2\setminus E(t)\}_{t\geq0}$ is a flat flow starting from $\T^2\setminus E(0)$, by replacing $E(0)$ with its complement in $\T^2$ if needed, we may assume without loss of generality that $|E(0)|\leq\frac12$. We will  show that in this case the limiting set is a finite union of disjoint open discs with equal radii.

Arguing as before we deduce that by \eqref{eq:energy-compa2} the functions 
\[
f_n(t) = P_{\T^2}(E^{(h_n)}(t))
\]
are monotone non-increasing with  $f_n(t) <2$ and (possibly up to a further unrelabelled subsequence)  converge pointwise to a non-increasing function $f_\infty:[0,+\infty)\to \R$.
Set $F_\infty= \lim_{t\to +\infty}f_\infty(t)$.  Again we divide the proof in two cases.

\textbf{Case 1}: There exists $d\in \N\setminus\{0\}$ such that either $P_d<F_\infty<P_{d+1}$, or $F_\infty=P_d$ and $f_\infty(t) >P_d$ for every $t\in [0,+\infty)$.
In this case, there exists $\bar t \geq 1$ such that, for every $T>\bar t$ there exist $\bar n\in \N\setminus\{0\}$ such that
\begin{gather}
\label{eq:thm3-1}
P_d\leq f_n(t) < P_{d+1}\qquad \text{and}\qquad P_{d+1} -f_n(t) \geq \frac{P_{d+1}-F_\infty}{2}=:\delta_0
\end{gather}
for every $n\geq \bar n$ and   $t \in [ \bar t, T]$.
By summing \eqref{eq:energy-compa2}  and using \eqref{eq:thm3-1} we obtain 
for every $i \in \big\{\lfloor\frac{\bar t}{h_n}\rfloor, \ldots, \lfloor\frac{T}{h_n}\rfloor\big\}$ that 
\beq\label{eq:thm3-2}
\frac{h_n}{2}\sum_{k=i+1}^{\lfloor\frac{T}{h_n}\rfloor} \mathfrak{D}(E^{(h_n)}_{k},E^{(h_n)}_{k-1}) \leq P_{\T^2}(E_i^{(h_n)}) - P_{\T^2}\big(E^{(h_n)}_{\lfloor\frac{T}{h_n}\rfloor}\big) \leq P_{\T^2}(E_i^{(h_n)})  -P_d,
\eeq
where $\mathfrak{D}(E^{(h_n)}_{k},E^{(h_n)}_{k-1}) $ is defined in \eqref{def:distance2}.   Then by \eqref{eq:thm3-2} and by Corollary  \ref{coro:2Dale2}  it holds
\[
\frac{h_n}{2}\sum_{k=i+1}^{\lfloor\frac{T}{h_n}\rfloor}  \mathfrak{D}(E^{(h_n)}_{k},E^{(h_n)}_{k-1}) \leq P_{\T^2}(E_i^{(h_n)}) - P_d \leq  C_0\|\nabla U_i^{(h)}\|_{L^2(\T^2)}^2= C_0 \mathfrak{D}(E^{(h_n)}_{i},E^{(h_n)}_{i-1}).
\]
Therefore we conclude 
\[
\sum_{k=i+1}^{\lfloor\frac{T}{h_n}\rfloor} \mathfrak{D}(E^{(h_n)}_{k},E^{(h_n)}_{k-1}) \leq \frac{2C_0}{h_n}\mathfrak{D}(E^{(h_n)}_{i},E^{(h_n)}_{i-1}) .
\]
Setting $a^{(h_n)}_k=  h_n \, \mathfrak{D}(E^{(h_n)}_{k},E^{(h_n)}_{k-1})$
we have that 
for every $i \in \big\{\lfloor\frac{\bar t}{h_n}\rfloor, \ldots, \lfloor\frac{T}{h_n}\rfloor\big\}$ it holds 
\[
\sum_{k=i}^{\lfloor\frac{T}{h_n}\rfloor} a_k^{(h_n)}
 \leq \frac{2C_0+h_n}{h_n} a_i^{(h_n)}\leq \frac{3C_0}{h_n} a_i^{(h_n)}
 \]
and by applying  \eqref{eq:thm3-2} with $i = \lfloor\frac{\bar t}{h_n}\rfloor$ yields
\[
\sum_{k=\lfloor\frac{\bar t}{h_n}\rfloor +1}^{\lfloor\frac{T}{h_n}\rfloor} a_k^{(h_n)} \leq 
P_{\T^2}\Big(E_{\lfloor\frac{\bar t}{h_n}\rfloor}^{(h_n)}\Big) \leq 
P_{\T^2}(E(0)) < 2.
\]
Therefore  Lemma \ref{an1}, with $\mathcal{I}$ being the empty set this time,   implies
\[
\sum_{k=i+1}^{\lfloor\frac{T}{h_n}\rfloor}a_k^{(h_n)}\leq 2 \left(1-\frac{h_n}{3C_0}\right)^{i- \frac{\bar t}{h_n}} 
\qquad \text{for all }\, i= \lfloor\frac{\bar t}{h_n}\rfloor, \ldots, \lfloor\frac{T}{h_n}\rfloor.
\]
In other words for every $t\in [\bar t, T]$ we have 
\[
\sum_{k=\lfloor\frac{t}{h_n}\rfloor+1}^{\lfloor\frac{T}{h_n}\rfloor}
h_n\, \mathfrak{D}(E^{(h_n)}_{k},E^{(h_n)}_{k-1})
\leq  2 \left(1-\frac{h_n}{3C_0}\right)^{\lfloor\frac{t}{h_n}\rfloor - \frac{\bar t}{h_n}}\leq  C e^{-\frac{t}{3C_0}}
\]
for $h_n\leq h_0(T)$. Then,  by \eqref{def:Hmenouno-bound} and by the above inequality we have that for $\bar t \leq t <s \leq T$ with $s \leq t+1$ it holds
\beq\label{eq:thm3-3}
\begin{split}
\|\chi_{E^{(h_n)}(s)} &- \chi_{E^{(h_n)}(t)}\|_{H^{-1}(\T^2)} \leq \sum_{k=\lfloor\frac{t}{h_n}\rfloor+1}^{\lfloor\frac{s}{h_n}\rfloor} \|\chi_{E^{(h_n)}_{k}} - \chi_{E^{(h_n)}_{k-1}}\|_{H^{-1}(\T^2)}  \\ 
&\leq \frac{\sqrt{s-t}}{\sqrt{h_n}} \Big( \sum_{k=\lfloor\frac{t}{h_n}\rfloor+1}^{\lfloor\frac{T}{h_n}\rfloor} \|\chi_{E^{(h_n)}_{k}} - \chi_{E^{(h_n)}_{k-1}}\|_{H^{-1}(\T^2)}^2\Big)^{\frac12} \\
&\leq  \frac{1}{\sqrt{h_n}} \Big( \sum_{k=\lfloor\frac{t}{h_n}\rfloor+1}^{\lfloor\frac{T}{h_n}\rfloor} h_n^2 \, \mathfrak{D}(E^{(h_n)}_{k},E^{(h_n)}_{k-1})\Big)^{\frac12}\\
&\leq C\, e^{-\frac{t}{6C_0}},
\end{split}
\eeq
when $h_n\leq h_0(T)$. 

Recall for all $t >0$ it holds $\|\chi_{E^{(h_n)}(t)}\|_{\text{BV}(\T^2)}\leq \|\chi_{E(0)}\|_{\text{BV}(\T^2)} \leq 3$. We use Lemma \ref{lem:luckhaus}  and \eqref{eq:thm3-3} to deduce  
\[
\begin{split}
\|\chi_{E^{(h_n)}(s)} &- \chi_{E^{(h_n)}(t)}\|_{L^1(\T^2)} \\
&\leq C \e  \, \|\chi_{E^{(h_n)}(s)} - \chi_{E^{(h_n)}(t)}\|_{\text{BV}(\T^2)} + C \e^{-1} \|\chi_{E^{(h_n)}(s)} - \chi_{E^{(h_n)}(t)}\|_{H^{-1}(\T^2)} \\
&\leq C \e  + C \e^{-1} e^{-\frac{t}{6C_0}}. 
\end{split}
\]
Choosing $\e =  e^{-\frac{t}{12C_0}}$ yields 
\[
\|\chi_{E^{(h_n)}(s)} - \chi_{E^{(h_n)}(t)} \|_{L^1(\T^2)}  \leq C e^{-\frac{t}{12C_0}}.
\]
 Letting $h_n \to 0$ we obtain  for the limit flow 
\[
|E(s) \Delta E(t)|   \leq C \,  e^{-\frac{t}{12C_0}}.
\]
From here we conclude that $E(t)$ converges to a set of finite perimeter $E_\infty$ exponentially fast. 

We may characterize the limit set $E_\infty$ as a disjoint union of open disks $D_r(x_1), \dots, D_r(x_d)$ thanks to Proposition \ref{prop:2DAle2} by arguing as in the proof of Theorem~\ref{thm2}. Similarly, we  obtain the convergence of the perimeters.  We leave the details for the reader.  

 Also the argument for the Case 2, when there exist $d\in \N\setminus\{0\}$ and $\bar t > 0$ such that $F_\infty=P_d=f_\infty(t)$ for every $t\geq \bar t$, follows by the same argument as in  the proof of Theorem~\ref{thm2}. 
\end{proof}

\section*{Acknowledgments}
V.~J.~was supported by the Academy of Finland grant 314227.  E.~S.~has been supported by the ERC-STG grant 759229 {\sc HiCoS}. M.~M. was supported by the University of Parma FIL grant `Regularity, Nonlinear Potential Theory and related topics''.

\end{document}